\newtheorem{theorem}{Theorem}
\newtheorem{lemma}[theorem]{Lemma}
\newtheorem{corollary}[theorem]{Corollary}
\theoremstyle{definition}
\newtheorem{definition}{Definition}
\newtheorem*{remark*}{Remark}
\newcommand\dto{\overset{\mathrm{d}}{\to}}
\newcommand\RR{{\mathbb R}}
\newcommand\cD{\mathcal{D}}
\newcommand\cF{\mathcal{F}}
\newcommand\cB{\mathcal{B}}
\newcommand\cI{\mathcal{I}}
\newcommand\cL{\mathcal{L}}
\newcommand\bb[1]{\bigl(#1\bigr)}
\renewcommand{\ge}{\geqslant}
\renewcommand{\geq}{\geqslant}
\renewcommand{\le}{\leqslant}
\renewcommand{\leq}{\leqslant}
\renewcommand{\Pr}{{\mathbb P}}
\newcommand\E{\operatorname{\mathbb E{}}}
\newcommand{\cc}{\mathrm{c}}
\newcommand\Var{\operatorname{Var}}
\newcommand\eps{\varepsilon}
\newcommand\Bi{\mathrm{Bin}}
\newcommand\floor[1]{\lfloor #1\rfloor}
\newcommand\bigabs[1]{\bigl|#1\bigr|}
\newcommand\Roesler{R\"{o}sler}
\newcommand\txi{\tilde{\xi}}
\newcommand\ceil[1]{\lceil #1\rceil}
\newcommand{\dx}{\mathrm{d}x}
\newcommand{\dda}{\frac{\mathrm{d}}{\mathrm{d}\alpha}}
\begin{document}
\title{A local limit theorem for {\tt Quicksort} key comparisons via multi-round smoothing}
\author{B\'ela Bollob\'as%
\thanks{Department of Pure Mathematics and Mathematical Statistics,
Wilberforce Road, Cambridge CB3 0WB, UK and
Department of Mathematical Sciences, University of Memphis, Memphis TN 38152, USA.
E-mail: {\tt b.bollobas@dpmms.cam.ac.uk}.}
\thanks{Research supported in part by NSF grant DMS-1301614 and EU MULTIPLEX grant 317532.}
\and James Allen Fill%
\thanks{Department of Applied Mathematics and Statistics,
The Johns Hopkins University,
Baltimore, MD 21218-2682, USA.
E-mail: {\tt jimfill@jhu.edu}.}
\thanks{Research supported by the Acheson J. Duncan Fund for the Advancement of Research
in Statistics.}
\and Oliver Riordan%
\thanks{Mathematical Institute, University of Oxford, Radcliffe Observatory Quarter, Woodstock Road, Oxford OX2 6GG, UK.
% and
%Department of Mathematical Sciences, University of Memphis, Memphis TN 38152, USA.
E-mail: {\tt riordan@maths.ox.ac.uk}.}}
\date{January~16, 2017}
% (compiled \today)}
\maketitle

\begin{abstract}
As proved by R\'{e}gnier~\cite{Reg} and R\"osler~\cite{Roesler}, the number of key comparisons required by the randomized sorting algorithm {\tt QuickSort} to sort a list of~$n$ distinct items (keys) satisfies a global distributional limit theorem. Fill and Janson~\cite{FJ2,FJ1} proved results about the limiting distribution and the rate of convergence, and used these to prove a result part way towards a corresponding local limit theorem. In this paper we use a multi-round smoothing technique to prove the full local limit theorem.
\end{abstract}

\section{Introduction}

{\tt QuickSort}, a basic sorting algorithm, may be described as follows.
The input is a list, of length $n\ge 0$, of distinct real numbers (say).
If $n=0$ or $n=1$, do nothing (the list is already sorted). Otherwise, pick an element of the list
uniformly at random to use as the \emph{pivot}, and compare each other element with
the pivot. Recursively sort the two resulting sublists, and combine them in the obvious
way, with the pivot in the middle. (Equivalently, one can sort the initial list
randomly, and always use the first element in each (sub)list as the pivot.)
The recursive calls to the algorithm lead
to a tree, the \emph{execution tree}, with one node for each call. Each
node either has no children (if the corresponding list had length $0$ or $1$)
or two children. The main quantity we study here is the random variable $Q_n$, the total number
of comparisons used in sorting a list of $n$ distinct items.

R\'{e}gnier~\cite{Reg} and \Roesler~\cite{Roesler} each established, using
different methods, a distributional limit theorem for $Q_n$,
proving that $(Q_n - \E Q_n) / n \dto Q$ as $n \to \infty$,
where~$Q$ has a certain distribution that can be characterized in a variety of ways---to name one, as the unique fixed point of a certain distributional identity.  Using that distributional identity, Fill and Janson~\cite{FJ2} showed (among stronger results) that the distribution of~$Q$ has a continuous and strictly positive density $f$ on $\RR$.

Fill and Janson~\cite{FJ1} proved bounds on the rate of convergence in various metrics, including the Kolmogorov--Smirnov distance (i.e., sup-norm distance for distribution functions).  Using this and their results about~$f$ from~\cite{FJ2}, they proved a `semi-local' limit theorem for $Q_n$;
see their Theorem~6.1, which is reproduced in large part as Theorem~\ref{thstart} below.
They posed the question \cite[Open Problem~6.2]{FJ1} of whether the corresponding local limit theorem (LLT) holds.
Here we show that the answer is yes, using a multi-round smoothing technique developed
in an initial draft of~\cite{BR}, but not used in the final version of that paper.
This method may well be applicable to other distributions in which one can find `smooth parts'
on various different scales, including other distributions obeying recurrences of a type
similar to that obeyed by $Q_n$. Taking the `semi-local' limit theorem of~\cite{FJ1} as a starting point, in this paper we shall prove the following LLT for $Q_n$, together with an explicit (but almost certainly not sharp) rate of convergence.

\begin{theorem}\label{th1}
Defining $Q_n$ and $Q$ as above, and setting $q_n := \E Q_n$,
there exists a constant $\eps>0$ such that 
the following holds.
We have
\begin{equation}\label{th1eq}
 \Pr(Q_n = x) = n^{-1} f((x - q_n) / n) + O(n^{-1-\eps})
\end{equation}
uniformly in integers~$x$,
where $f$ is the continuous probability density function of~$Q$.
\end{theorem}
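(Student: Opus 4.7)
The plan is to bootstrap the semi-local estimate of Fill and Janson (Theorem~\ref{thstart}) into the pointwise statement~\eqref{th1eq} using the QuickSort distributional recursion
\begin{equation*}
Q_n \Leq Q^{(1)}_{I_n} + Q^{(2)}_{n-1-I_n} + n - 1,
\end{equation*}
in which $I_n$ is uniform on $\{0,1,\dots,n-1\}$ and $Q^{(1)}, Q^{(2)}$ are independent copies of the QuickSort family. Setting $p_n(x) := \Pr(Q_n = x)$, $g_n(x) := n^{-1}f((x-q_n)/n)$, and $r_n := p_n - g_n$, this identity yields
\begin{equation*}
p_n(x) = \frac{1}{n}\sum_{i=0}^{n-1} (p_i * p_{n-1-i})(x - n + 1).
\end{equation*}
Theorem~\ref{thstart} supplies control on the window-averaged sums $\sum_{y=a}^{a+k-1} r_n(y)$ at some initial scale $k_0 = n^{1-\delta_0}$; the target~\eqref{th1eq} is exactly the same kind of control at $k=1$.

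The core of the argument is an induction that, in each round, replaces the current scale $k$ by $k/n^{\beta}$ for some fixed $\beta>0$, while only mildly eroding the error exponent. To execute one bootstrap step, substitute $p_j = g_j + r_j$ in the recursion and analyse the four contributions $g_i*g_{n-1-i}$, $g_i*r_{n-1-i}$, $r_i*g_{n-1-i}$, and $r_i*r_{n-1-i}$. The main term $(1/n)\sum_i g_i*g_{n-1-i}$ reproduces $g_n$ up to a small error via the continuous analogue of the recursion that characterises~$f$, together with the smoothness of~$f$ proved in~\cite{FJ2}. Each cross term $g*r$ is estimated by grouping the convolution into windows of length $k$: since $g_n$ is Lipschitz with constant $O(n^{-2})$ and satisfies $\sum_j |g_n(jk)| = O(1/k)$, the contribution of window~$j$ is bounded by $|g(jk)|$ times the window sum of~$r$, and after summation the factors of $k$ cancel to give a pointwise bound. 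The diagonal term $r*r$ is controlled by Cauchy--Schwarz together with an $L^\infty$ Kolmogorov-type bound derived from~\cite{FJ1}. An additional source of smoothing is the outer averaging $(1/n)\sum_i$, which acts like integration in the pivot variable and permits the window size to shrink by another factor $n^{-\beta'}$ per round.

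I expect the main obstacle to be handling pivots $i$ close to the endpoints $0$ or $n-1$, where the inductive hypothesis on $Q_i$ either does not apply or carries much weaker constants. These contributions I would absorb by restricting the main analysis to $i \in (\gamma n, (1-\gamma)n)$ with $\gamma$ a small polynomial in $n^{-1}$, using the trivial bound $p_i \le 1$ on the discarded fraction. A secondary book-keeping issue is that the final exponent $\eps$ in~\eqref{th1eq} is the minimum over the $T = O(1/\beta)$ bootstrap rounds of the per-round exponent, so the trade-off between the scale-reduction rate $\beta$ and the error loss per round must be tuned carefully to leave $\eps>0$.
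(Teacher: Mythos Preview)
Your approach differs substantially from the paper's and, as written, has a genuine gap in the control of the diagonal term $r_i * r_{n-1-i}$.

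You propose to bound $(r_i * r_{n-1-i})(z)$ by Cauchy--Schwarz together with a Kolmogorov-type bound from~\cite{FJ1}. But Cauchy--Schwarz gives $|(r_i * r_{n-1-i})(z)| \le \|r_i\|_2\,\|r_{n-1-i}\|_2$, and the only a priori control available is $\|r_j\|_2^2 \le \|r_j\|_\infty \|r_j\|_1 \le 2\|p_j\|_\infty$, where $\|p_j\|_\infty = O(j^{-1/6})$ from Theorem~\ref{thstart}. After averaging over~$i$ this gives a contribution of order $n^{-1/6}$, against a target of $O(n^{-1-\eps})$. Even if you grant the full conclusion $\|p_j\|_\infty=O(1/j)$ as an inductive hypothesis, Cauchy--Schwarz yields only $\|r_j\|_2=O(j^{-1/2})$ and hence a diagonal contribution of order $n^{-1}\log n$, still short of any polynomial saving. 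Your window-sum hypothesis on $r_j$ does not rescue this: a sequence can have all length-$k$ window sums equal to zero and yet arbitrarily large $\ell^2$ norm (think of $(-1)^y$), so window bounds give no pointwise control on $(r_i*r_{n-1-i})(z)$. A Kolmogorov bound $\sup_x|\sum_{y\le x}r_j(y)|=O(j^{-1/2})$ runs into the same wall after an Abel summation, since it trades the partial sums of $r_i$ for first differences of $r_{n-1-i}$, on which you have no control.

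The structural reason the one-step recursion is too weak is that it produces only two summands, and the outer average $\tfrac{1}{n}\sum_i$ is an average in the pivot variable, not in~$x$; it does not smooth the point probabilities. The paper sidesteps this by expanding the execution tree until there are $s=\Theta(n/r)$ subproblems each of size $\Theta(r)$ (Lemma~\ref{expandA}, Corollary~\ref{Cexpand}), and then conditioning so that $Q_n=A+B$ with $B$ a sum of~$s$ independent bounded summands (Lemma~\ref{expand2}). Such a $B$ is close to Gaussian by Berry--Esseen and is genuinely smooth on the scale $r\sqrt{s}\sim\sqrt{rn}$; this is what drives the scale reduction in Lemma~\ref{core}, with a separate binomial decomposition (Lemma~\ref{expandbin}, Lemma~\ref{corebin}) handling the final step to $\ell=1$. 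To make your scheme work you would need an analogous source of \emph{many} independent pieces of comparable size; a single application of the distributional recursion does not supply one.
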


In fact, our proof of Theorem~\ref{th1} gives a bound of the form $O(n^{-19/18}\log n)$ on the error probability in~\eqref{th1eq}.

The basic idea used in our proof, that of strengthening a distributional (often normal) limit theorem
to a local one by smoothing, is by now quite old.
Suppose that $X_n$ takes integer values, and that we know that
\begin{equation}\label{DLT1}
 (X_n-\mu_n)/\sigma_n \dto X,
\end{equation}
for some
nice distribution $X$ (say with continuous, strictly positive density~$f$ on $\RR$).
By the corresponding LLT we mean the statement that whenever $x_n$ is a sequence of
deterministic values with $x_n=\mu_n+O(\sigma_n)$ then
\begin{equation}\label{LLT1}
 \Pr(X_n=x_n) = \sigma_n^{-1} f( (x_n-\mu_n)/\sigma_n) + o(\sigma_n^{-1}).
\end{equation}
%where $f(\cdot)$ is the density function of $X$.
It is not hard to see that to deduce \eqref{LLT1} from \eqref{DLT1}, it suffices
to show that `nearby' values have similar probabilities, i.e., that if
$x_n,x_n'=\mu_n+O(\sigma_n)$ and $x_n-x_n'=o(\sigma_n)$, then 
\begin{equation}\label{diff1}
 \Pr(X_n=x_n)=\Pr(X_n=x_n')+o(\sigma_n^{-1}).
\end{equation}

In turn, to prove \eqref{diff1} we might (as in MacDonald~\cite{McDonald})
try to find a `smooth part'
within the distribution of $X_n$. More precisely, we might try to write $X_n=A_n+B_n$
where, for some $\sigma$-algebra $\cF_n$, we have that $A_n$ is $\cF_n$-measurable
and the conditional distribution of $B_n$ given $\cF_n$ obeys (or nearly always obeys)
a relation corresponding to \eqref{diff1}. Then it follows easily (by first considering conditional
probabilities given $\cF_n$) that \eqref{diff1} holds. One idea is
to choose $\cF_n$ so that $B_n$ has a very well understood distribution, such
as a binomial one.

In some contexts, this approach works directly. Here (as far as we can see) it does not.
We can decompose $Q_n$ as above with $B_n$ binomial (see Lemma~\ref{expandbin}), but $B_n$ will have variance $\Theta(n)$,
whereas $\Var Q_n=\Theta(n^2)$. This would, roughly speaking, allow us to
establish that $\Pr(Q_n=x_n)$ and $\Pr(Q_n=x_n')$ are similar for $x_n-x_n'=o(\sqrt{n})$,
but we need this relation for all $x_n-x_n'=o(n)$.\footnote{Actually, since \cite{FJ1}
already contains a `semi-LLT', it  would suffice to consider $x_n-x_n'=O(n^{5/6})$.}

The key idea, as in the draft of~\cite{BR},
is not to try to jump straight from the global
limit theorem to the local one, but to proceed in stages.\footnote{A related idea has recently been used (independently) by Diaconis and Hough~\cite{DH}, in a different context. They work with characteristic functions, rather than directly with probabilities as we do here, establishing smoothness at a range of frequency scales.}
For certain pairs of values
$\ell<m$ with $\ell\ge 1$ and $m=o(n)$ we attempt to show that for any two length-$\ell$
subintervals $I_1$, $I_2$ of an interval $J$ of length $m$ we have
\begin{equation}\label{IJ}
 \Pr(Q_n\in I_1) = \Pr(Q_n\in I_2) + o(\ell/n).
\end{equation}
The distributional limit theorem gives us that for some $m=o(n)$
each interval $J$ of length $m$ has about the right probability, and we then
use the relation above to transfer this to shorter and shorter scales, eventually
ending with $\ell=1$. In establishing \eqref{IJ}, the idea is as before to find
a suitable decomposition $Q_n=A_n+B_n$, but we can use a different decomposition for each
scale---there is no requirement that these decompositions be `compatible' in any
way. For each pair $(\ell,m)$ we need such a decomposition where the distribution of $B_n$
has a property analogous to \eqref{IJ}.

There are some complications carrying this out. Our random variables $B_n$ will have
smaller variances than the original random variables $Q_n$. This means that the %absolute
point probabilities $\Pr(B_n=x_n)$, and (as it turns out) their differences
$\Pr(B_n=x_n)-\Pr(B_n=x_n')$, are too large compared with the bounds we are aiming
for, and the same holds with the points $x_n$ and $x_n'$ replaced by intervals.
For this reason we mostly work with ratios, showing under suitable conditions
that $\Pr(B_n\in I_1)\sim \Pr(B_n\in I_2)$. But this is not always true: Even if $I_1$ and $I_2$
are close, if both are far into a tail of $B_n$ the ratio of the probabilities may be far from $1$.
To deal with this we use another trick: If for some interval $I_1$ there is a significant
probability $p$ that $A_n+B_n\in I$ with the translated interval $I-A_n$ being far above
the mean of $B_n$, say, then there is another interval $J$ (to the left of $I$)
such that there is a probability much larger than $p$ that $A_n+B_n\in J$.
Hence what we will actually show, for a series of scales $m$, is that
(i) each interval of length $m$
%(not too far from $\E Q_n$)
has about the right $Q_n$-probability, and
(ii) no interval of length $m$
has $Q_n$-probability much larger than it should.
We will use (ii) at the longer scale $m$ to show that the `tail contributions'
described above are small at scale $\ell$. Thus we will be able to transfer the
combined statement (i)+(ii) from longer to shorter scales.

In the particular context of {\tt QuickSort} there is a very nice way to find binomial-like
smooth parts: we partially expand the execution tree, looking, roughly speaking,
for a way of writing the original instance as the union of $\Theta(s)$ instances
of {\tt QuickSort} each run on $\Theta(r)$ input values, where $s=n/r$. Conditioning
on this partial expansion (plus a little further information) the unknown part
of the distribution is then `binomial-like': it is a sum of $\Theta(s)$ independent
random variables each with `scale' $\Theta(r)$.

The rest of the paper is organized as follows: in Section~\ref{sec_prelim} we state
two standard results we shall need later, and then establish
the existence of the decompositions described in the previous paragraph.
In Section~\ref{sec_binom} we prove some simple properties of `binomial-like'
distributions. Section~\ref{sec_core} is the heart of the paper; here we present
the core smoothing argument, showing how to transfer `smoothness' from a scale
$m$ to a scale $\ell\le m$ under suitable conditions. In Section~\ref{sec_complete}
we complete the proof of Theorem~\ref{th1}; this is a matter of applying the results
from Section~\ref{sec_core} with suitable parameters, taking
as a starting point the `semi-local' limit theorem established by Fill and Janson~\cite{FJ1}.
Finally, in Section~\ref{sec_soft}
we outline a different way of applying the same smoothing results, taking a weaker
distributional convergence result as the starting point; this may be applicable in other settings.

\section{Preliminaries}\label{sec_prelim}

\subsection{Some standard inequalities}
We shall use the Azuma--Hoeffding inequality (see \cite{Azuma} and \cite{Hoeff})
in the following form (see, for example, Ross~\cite[Theorem 6.3.3]{Ross}).
\begin{theorem}\label{Azuma}
Let $(Z_n)_{n \geq 1}$ be a martingale with mean $\mu = \E Z_n$. Let $Z_0 = \mu$ and suppose that for
nonnegative constants $\alpha_i, \beta_i$, $i \geq 1$, we have
\[
- \alpha_i \leq Z_i - Z_{i - 1} \leq \beta_i.
\]
Then, for any $n \geq 0$ and $a \geq 0$ we have
\[
\Pr(Z_n - \mu\geq a) \leq \exp\left\{ - 2 a^2 \left/ \sum_{i = 1}^n (\alpha_i + \beta_i)^2 \right. \right\},
\]
and the same bound applies to $\Pr(Z_n - \mu \leq - a)$. \qed
\end{theorem}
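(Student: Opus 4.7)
The plan is to run the classical Chernoff bound on the moment generating function, with the per-step control coming from Hoeffding's lemma applied conditionally. Writing $D_i := Z_i - Z_{i - 1}$ and $\cF_i$ for the natural filtration, the martingale property gives $\E[D_i \mid \cF_{i-1}] = 0$, while the hypothesis gives $D_i \in [-\alpha_i, \beta_i]$ almost surely, so the conditional range of $D_i$ has length $\alpha_i + \beta_i$.

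The key ingredient I would invoke is Hoeffding's lemma: if $X$ has $\E X = 0$ and $X \in [a,b]$ almost surely, then for every $t \in \RR$,
\[
 \E[e^{t X}] \le \exp\bb{t^2(b-a)^2/8}.
\]
Applied to the conditional distribution of $D_i$ given $\cF_{i-1}$, this yields $\E[e^{t D_i} \mid \cF_{i - 1}] \le \exp\bb{t^2 (\alpha_i+\beta_i)^2/8}$ almost surely. Peeling off the factors one by one using the tower property, I obtain
\[
 \E[e^{t(Z_n - \mu)}] = \E\Bigl[e^{t \sum_{i < n} D_i} \E[e^{t D_n} \mid \cF_{n-1}]\Bigr] \le \exp\bb{t^2 (\alpha_n+\beta_n)^2/8} \cdot \E[e^{t \sum_{i < n} D_i}],
\]
and iterating gives $\E[e^{t(Z_n - \mu)}] \le \exp(t^2 C/8)$ with $C := \sum_{i=1}^n (\alpha_i + \beta_i)^2$. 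Markov's inequality applied to $e^{t(Z_n - \mu)}$ then yields $\Pr(Z_n - \mu \ge a) \le \exp(-t a + t^2 C / 8)$ for every $t > 0$; optimizing at $t = 4 a / C$ produces the claimed bound $\exp(-2 a^2 / C)$. The lower tail bound follows by applying the same argument to the martingale $-Z_n$.

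The one non-trivial step is Hoeffding's lemma itself, which I would prove by convexity: for $x \in [a,b]$, the bound $e^{tx} \le \tfrac{b - x}{b - a} e^{ta} + \tfrac{x - a}{b - a} e^{tb}$ holds since $x \mapsto e^{tx}$ is convex; taking expectations with $\E X = 0$ gives $\E[e^{tX}] \le \tfrac{b}{b-a} e^{ta} - \tfrac{a}{b-a} e^{tb} =: e^{\phi(t)}$. A direct computation shows $\phi(0) = \phi'(0) = 0$ and $\phi''(t) \le (b-a)^2/4$ for all $t$ (the second derivative has the form $p(1-p)(b-a)^2$ for a $p \in [0,1]$), so Taylor's theorem gives $\phi(t) \le t^2(b-a)^2/8$. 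This is the only place where quantitative care is needed; everything else is a straightforward conditional Chernoff argument. Since the paper only uses the conclusion as a black box and cites a textbook reference, no further discussion is warranted beyond this sketch.
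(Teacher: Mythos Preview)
Your argument is correct and is the standard proof of the Azuma--Hoeffding inequality via Hoeffding's lemma and the exponential Chernoff method. Note, however, that the paper does not actually prove this theorem: it is stated as a background result with a \qed\ and a citation to Ross~\cite{Ross} (and to Azuma~\cite{Azuma} and Hoeffding~\cite{Hoeff}), so there is no in-paper argument to compare against. Your sketch supplies exactly the textbook proof those references contain, and nothing more is needed here.
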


We shall also need Esseen's inequality, also known as the Berry--Esseen Theorem; see, for example, Petrov~\cite[Ch. V, Theorem 3]{Petrov}.
We write~$\Phi$ for the distribution
function of the standard normal random variable.
\begin{theorem}\label{BE}
Let $Z_1,\ldots,Z_t$ be independent random variables with $\rho=\sum_{i=1}^t \E(|Z_i|^3)$ finite,
and let $S=\sum_{i=1}^t Z_i$.
Then
\[
 \sup_x \bigabs{ \Pr( S\le x ) - \Phi((x-\mu)/\sigma) } \le A\rho/\sigma^3,
\]
where $\mu$ and $\sigma^2$ are the mean and variance of $S$,
and $A$ is an absolute constant.\qed
\end{theorem}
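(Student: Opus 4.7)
The plan is to follow Esseen's classical characteristic-function argument. The first step is to standardize: replacing each $Z_i$ by $(Z_i - \E Z_i)/\sigma$ reduces the problem to the case where $\E Z_i = 0$ and $\sum \E Z_i^2 = 1$, so that the goal becomes $\sup_x |\Pr(S \le x) - \Phi(x)| \le A\rho$, where now $\rho = \sum_i \E|Z_i|^3$ is the normalized third moment. We may further assume $\rho \le 1/2$, since otherwise the conclusion is trivial.

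The main analytical tool is Esseen's smoothing inequality: for any distribution function $F$ and any $G$ with bounded derivative of sup-norm $M$, and for all $T > 0$,
\[
\sup_x |F(x) - G(x)| \le \frac{1}{\pi} \int_{-T}^{T} \frac{|\hat F(t) - \hat G(t)|}{|t|}\, dt + \frac{c M}{T}
\]
for an absolute constant $c$. Taking $G = \Phi$ (so $M = 1/\sqrt{2\pi}$ and $\hat G(t) = e^{-t^2/2}$) and $T := 1/(2\rho)$, the remainder term is $O(\rho)$ of the desired size, so the proof reduces to bounding $|\phi_S(t) - e^{-t^2/2}|/|t|$ on $[-T,T]$, where $\phi_S$ is the characteristic function of $S$.

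For this, I would use the product structure $\phi_S(t) = \prod_i \phi_{Z_i}(t)$ together with the Taylor expansion of each factor: $\phi_{Z_i}(t) = 1 - \tfrac12 t^2 \E Z_i^2 + R_i(t)$ with $|R_i(t)| \le \tfrac16 |t|^3 \E|Z_i|^3$. Writing $e^{-t^2/2} = \prod_i e^{-t^2 \E Z_i^2 /2}$ and applying the telescoping identity $|\prod_i \alpha_i - \prod_i \beta_i| \le \sum_i |\alpha_i - \beta_i| \prod_{j \ne i}\max(|\alpha_j|, |\beta_j|)$, combined with the pointwise bound $|\phi_{Z_i}(t)| \le \exp(-c' t^2 \E Z_i^2)$ valid on our range $|t| \le 1/(2\rho)$, I would obtain an estimate of the form $|\phi_S(t) - e^{-t^2/2}| \le C \rho |t|^3 e^{-t^2/3}$ on $[-T,T]$. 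Dividing by $|t|$ and integrating (the tail being Gaussian, one can extend the integral to the full line at negligible cost) yields a contribution of order $\rho \int t^2 e^{-t^2/3}\, dt = O(\rho)$, matching the claimed bound.

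The main obstacle is obtaining clean enough control on the Taylor remainders for each factor so that the error survives being combined over all $i$: one needs the bound on $|\phi_{Z_i}(t) - e^{-t^2 \E Z_i^2/2}|$ and the product of the $|\phi_{Z_i}(t)|$'s (and the corresponding product of Gaussian factors) to cooperate, which requires choosing $T$ small enough that each $\phi_{Z_i}(t)$ is well approximated by its quadratic expansion, yet large enough that $M/T = O(\rho)$. With $T \sim 1/\rho$ both demands are met, but the quantitative bookkeeping in the Taylor estimates and the telescoping step is where the absolute constant $A$ is actually determined; the rest of the argument is completely mechanical.
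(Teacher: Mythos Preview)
The paper does not prove this theorem at all: it is quoted as a standard result (note the \qed\ after the statement) with a reference to Petrov~\cite[Ch.~V, Theorem~3]{Petrov}. Your outline is essentially the classical Esseen proof found in that reference, so in substance you are reproducing the cited argument rather than diverging from the paper.

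Your sketch is broadly sound, but one step is stated too loosely to pass as written. The pointwise bound $|\phi_{Z_i}(t)| \le \exp(-c' t^2 \E Z_i^2)$ is not automatic for an arbitrary centred random variable with a third moment; it can fail badly for individual summands whose distribution is far from Gaussian. The standard workaround is not to bound each $|\phi_{Z_i}(t)|$ separately but to pass to logarithms: from the Taylor expansion one gets $\log \phi_{Z_i}(t) = -\tfrac12 t^2 \E Z_i^2 + O(|t|^3 \E|Z_i|^3)$ on the range $|t| \le T$, and summing over~$i$ yields $|\phi_S(t)| \le e^{-t^2/2 + C\rho|t|^3}$ directly, which on $|t| \le 1/(2\rho)$ gives the Gaussian-type damping you need for the integral. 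With that correction (or an equivalent device, such as the inequality $|\phi_{Z_i}(t)| \le 1 - \tfrac13 t^2 \E Z_i^2$ valid when $|t|\E|Z_i|^3 \le \E Z_i^2$, combined with a separate treatment of the remaining indices), the rest of your outline goes through and produces the stated bound with an explicit absolute constant.
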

 
\subsection{Decomposing the execution tree}

In this subsection we shall show that, given a parameter $r$,
a single run of {\tt QuickSort} on a list of length $n$ will, with high probability, involve
$\Omega(n/r)$ instances of {\tt QuickSort} run on disjoint lists of length
between $r/2$ and $r$.

Let $2\le r< n$ be integers. We can implement {\tt QuickSort} on a list of length $n$ in
two phases as follows: in the first step of Phase I, pick the random pivot dividing
the original list into two sublists of total length $n-1$.  In step $t$ of Phase I, if
all the current sublists have length at most $r$, do nothing.  Otherwise, pick a sublist
of length at least $r+1$ arbitrarily, and pick the random pivot in this sublist, dividing
its remaining elements into two new sublists. After $n$ steps, we proceed to Phase II,
where we simply run {\tt QuickSort} on all remaining sublists.
Let $X_{n,r}$ denote the number of sublists at the end of Phase I that have length
between $r/2$ and $r$.

\begin{lemma}\label{expandA}
Let $r \geq 20$ be even and $n\ge 5r$.
Then
\[
 \Pr\left(X_{n,r}\le \tfrac{n}{3r}\right) \le e^{-n/(400r)}.
\]
\end{lemma}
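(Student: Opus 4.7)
I would prove Lemma~\ref{expandA} via a Doob martingale for $X_{n,r}$ combined with Azuma--Hoeffding.

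The first step is to compute $\mu(L) := \E[X_{L,r}]$ explicitly. The boundary values are $\mu(L) = \mathbf{1}[r/2 \le L \le r]$ for $L \le r$; for $L \ge r+1$, conditioning on the uniform pivot gives $\mu(L) = (2/L) \sum_{k=0}^{L-1} \mu(k)$. Setting $g(L) := L\,\mu(L)$, the recurrence reads $g(L+1)/g(L) = (L+2)/L$ for $L \ge r+1$, and combined with the boundary sum this gives $\mu(L) = (L+1)/(r+1)$ for all $L \ge r+1$; in particular $\E[X_{n,r}] = (n+1)/(r+1) \ge n/r$.

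Next, I construct the Doob martingale $M_t := \E[X_{n,r} \mid \cF_t]$, where $\cF_t$ captures the sublist chosen and the pivot position used in each of the first $t$ Phase~I splits. Write $h(k) := \mathbf{1}[k \in [r/2,r]]$ for $k \le r$ and $h(k) := (k+1)/(r+1)$ for $k \ge r+1$. When splitting a length-$L$ sublist with pivot $P$ uniform on $\{1,\ldots,L\}$, the increment is $h(P-1) + h(L-P) - (L+1)/(r+1)$. The key identity is the additivity of $\tilde\mu(k) := (k+1)/(r+1)$: one has $\tilde\mu(P-1) + \tilde\mu(L-P) = \tilde\mu(L)$, so the increment equals $[h(P-1) - \tilde\mu(P-1)] + [h(L-P) - \tilde\mu(L-P)]$. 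A direct case check shows $|h(k) - \tilde\mu(k)| \le r/(2(r+1))$ for $k \le r$ (and the difference is $0$ for $k \ge r+1$), so $|M_t - M_{t-1}| \le r/(r+1) < 1$.

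I then apply Azuma (Theorem~\ref{Azuma}) with $\delta := \mu(n) - n/(3r) = \Theta(n/r)$. Using the crude deterministic bound $T \le n$ on the number of splits already yields $\Pr(X_{n,r} \le n/(3r)) \le \exp(-\Omega(n/r^2))$, which suffices whenever $r = O(\sqrt n)$. The main obstacle is sharpening this to $\exp(-n/(400r))$ for larger $r$. This requires $T \le C n/r$ with high probability in place of $T \le n$. I would establish this by a second Azuma argument applied to the Doob martingale of $T$ itself, which satisfies an analogous recurrence giving $\E[T] \sim 2n/r$ and has bounded increments; a stopped-martingale trick converts this into concentration of $T$ since $T$ is bounded a.s.\ by $n$. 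A more robust alternative is Freedman's inequality applied to $(M_t)$ with the conditional quadratic variation $V := \sum_t \Var(M_t - M_{t-1} \mid \cF_{t-1})$: when $L \gg r$ most pivots produce two large children (hence a zero increment), so the per-step conditional variance is only $O(r/L)$, and a further recursion analogous to the one for $\mu$ gives $\E V = O(n/r)$, from which Freedman's bound produces the desired $\exp(-\Omega(n/r))$ tail.
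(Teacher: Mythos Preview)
Your overall strategy---compute $\E X_{n,r}=(n+1)/(r+1)$, set up the Doob martingale, use the additivity of $\tilde\mu(k)=(k+1)/(r+1)$ to get increments bounded by $r/(r+1)<1$, and then apply Azuma with a bound on the number $T$ of splits---is exactly the paper's approach. One small slip: $(n+1)/(r+1)\ge n/r$ is false for $n>r$; what you actually need (and what the paper uses) is $(n+1)/(r+1)\ge n/(r+1)\ge 2n/(3r)$, so that the deviation down to $n/(3r)$ is at least $n/(3r)$.

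The only genuine difference is how you control $T$. The paper does not run a second martingale or invoke Freedman; instead it argues directly that if $T\ge t_0:=\lceil 20n/r\rceil$, then after $t_0$ steps there are $t_0+1$ sublists of total length $<n$, so at least $t_0/2$ of them have length $<r/10$, while each split of a list of length $>r$ creates such a short sublist with conditional probability $<3/10$. Hence the count of short sublists is stochastically dominated by $\Bi(t_0,3/10)$, and a Chernoff bound gives $\Pr(T\ge t_0)\le e^{-2t_0/25}$. This is shorter and more elementary than your proposed second Doob martingale for $T$ (which would require computing $\E T_n=2(n+1)/(r+2)-1$ and re-verifying bounded increments) and avoids Freedman entirely; your route would work, but the paper's binomial-domination trick is the cleaner way to close the gap.
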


\begin{proof}
We have specified that~$r$ be even only for convenience.
We have made no attempt to optimize the values of the various constants; these will be
irrelevant later.

Running {\tt QuickSort} in two phases as above, let $T$ be the number of
`active' steps in Phase I, i.e., steps in which we divide a sublist into two.
Clearly, $T\le n$, the first $T$ steps of Phase I are active, and after $T$
steps we have $T+1$ sublists of total length $n-T$. The idea of the proof
is to show that $T$ is very unlikely to be larger than $20n/r$, say,
that $\E X_{n,r}$ is of order $n/r$, and that each decision in the first
phase of our algorithm alters the conditional expectation of $X_{n,r}$
by at most $1$. The result will then follow from the Azuma--Hoeffding inequality.

Throughout the proof we keep $r\ge 20$ fixed.
Let
\[
 t_0=\ceil{20n/r}.
\]
Observe that if $T\ge t_0$, then after step $t_0$
we have $t_0+1$ sublists with total length $<n$. Since at most $10n/r\le t_0/2$
of these sublists can have length at least $r/10$, at least $t_0/2$ of our sublists
have length $<r/10$.
Let $N$ be the number of sublists after $t_0$ steps that have length
less than $r/10$, so we have shown that
\[
 \Pr( T \ge t_0) \le \Pr (N\ge t_0/2).
\]
In any step of Phase I, we either do nothing,
or randomly divide a list of some length $\ell\ge r+1$. In the latter
case, the (conditional, given the past) probability of producing a sublist
of length $<r/10$ is at most
\[
 2\frac{(r/10 +1)}{\ell} \le \frac{ 3r/10}{\ell} < \frac{3}{10},
\]
since $r\ge 20$ and $\ell>r$. It follows that $N$ is stochastically dominated by a binomial
distribution with parameters $t_0$ and $3/10$, so
\begin{equation}\label{Tbd}
 \Pr\bb{T\ge t_0} \le \Pr\bb{N\ge t_0/2} \le \Pr\bb{ \Bi(t_0,3/10) \ge t_0/2 } \le e^{-2t_0/25},
\end{equation}
using Theorem~\ref{Azuma}, or a standard Chernoff bound, for the last step.
 
Turning to the next part of the argument, as $r$ is fixed throughout,
let us write $X_n$ for the random variable $X_{n,r}$.
We extend the definition of $X_n$ to the case $n\le r$ by considering
Phase I to end immediately (with one `sublist' of length $n$) in this case.
The sequence $(X_n)$ satisfies the deterministic initial conditions
\begin{align*}
&X_0 = \cdots = X_{(r / 2) - 1} = 0, \\
&X_{r / 2} = \cdots = X_r = 1,
\end{align*}
and (considering the first step in Phase I as described above)
the distributional recurrence relation
\begin{equation}\label{Xdistrecur}
X_n \stackrel{\cal L}{=} X_{U_n - 1} + X^*_{n - U_n}, \quad n \geq r + 1,
\end{equation}
where, on the right, $X_j$ and $X^*_j$ are independent probabilistic copies of $X_j$ for each $j = 1, \dots, n - 1$ and $U_n$ is uniformly distributed on $\{1, \dots, n\}$, and is independent of all the $X$ and $X^*$ variables.
%  (The split here corresponds simply to the split of $T_n$ at the root into left and right subtrees.) 
Let
\[
\xi_n := \E X_n.
\]
From \eqref{Xdistrecur} we have $\xi_n=\frac{2}{n}\sum_{i=0}^{n-1}\xi_i$ for $n\ge r+1$.
It follows that
\begin{align*}
&\xi_0 = \cdots = \xi_{(r / 2) - 1} = 0, \\
&\xi_{r / 2} = \cdots = \xi_r = 1,
\end{align*}
and
\begin{equation}\label{xi}
\xi_n = \frac{n + 1}{r + 1}, \quad n \geq r + 1.
\end{equation}
(The last equation holds also for $n = r$.)
Define $\txi_n=\frac{n+1}{r+1}$ for all $n$.
Then $\txi_{k-1}+\txi_{n-k}=\txi_n$
always. Since
\[
 |\xi_n-\txi_n| \le \frac{r/2}{r+1} < \frac{1}{2},
\]
it follows that if $n\ge r+1$ (and so $\xi_n=\txi_n$), then
\begin{equation}\label{MLip}
 -1 <  \xi_{k - 1} + \xi_{n - k} - \xi_n  < 1
\end{equation}
for all $1 \leq k \leq n$. 

Let $\cF_t$ denote the $\sigma$-algebra corresponding to information revealed in
the first $t$ steps of Phase I as described above.
Define
\[
 M_t = \E[X_n\mid \cF_t],
\]
so that $(M_t)_{t=0}^n$ is a (Doob) martingale.
It follows from \eqref{MLip} that the martingale $(M_t)$, which has mean $M_0=\xi_n$ given 
by~\eqref{xi}, satisfies
\[
 -1 < M_t - M_{t - 1} < 1
\]
for every~$t$.

Let $E$ be the event that $X_n\le \frac{n}{3r}$.
Since 
\[
 \xi_n = \frac{n+1}{r+1} \ge \frac{n}{r+1} \ge \frac{2n}{3r},
\]
when $E$ holds we have $X_n-\xi_n\le -\frac{n}{3r}$.
After the first $T$ steps of Phase I, nothing further happens, so $M_T=M_{T+1}=\cdots=M_n=X_n$.
Hence, writing $t_0=\ceil{20n/r}$ as before, we have
\[
 \Pr(E) \le \Pr\left(T>t_0\right) + \Pr\left(M_{t_0}-\xi_n \le -\tfrac{n}{3r}\right).
\]
By \eqref{Tbd} and the Azuma--Hoeffding
inequality (Theorem~\ref{Azuma}), it follows that
\begin{eqnarray*}
 \Pr(E) 
 &\le& e^{-2t_0/25} +  \exp \left( - \frac{n^2}{18r^2t_0} \right) \\ 
&\le& \exp \left( - \frac{40n}{25r} \right) + \exp \left( - \frac{n}{378 r} \right)  \\
&\le& \exp \left( -   \frac{n}{400 r}  \right),
\end{eqnarray*}
where the penultimate inequality holds 
because $20n/r \le t_0 \le 21 n/r$, since
$n/r\ge 5$, and the final
inequality holds because $e^{- 8x / 5} + e^{- x / 378} \leq e^{- x / 400}$ for $x \geq 5$.
\end{proof}

\begin{corollary}\label{Cexpand}
Let $r \geq 20$ be even and  $n \ge 5 r$.
Then we may write $Q_n=A+B$ where, for some $\sigma$-algebra $\cF$, we have that $A$
is $\cF$-measurable, and, with probability at least
$1 - e^{-n/(400r)}$,
the conditional
distribution of $B$ given $\cF$ is the sum of
$s = \lceil n/(3r) \rceil$ independent random variables $B_1, \ldots, B_s$
with each $B_i$ having the distribution $Q_{r_i}$ for some $r_i$ with $r / 2 \le r_i \le r$.
\end{corollary}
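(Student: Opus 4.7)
The plan is to define the decomposition directly in terms of the two-phase execution of {\tt QuickSort} described just before Lemma~\ref{expandA}, and to use that lemma to control the exceptional event.

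First, I would run {\tt QuickSort} in the two-phase form, and let $\cF$ record (a) the complete history of Phase~I (all pivots chosen, hence the full list of sublists produced and their lengths), together with (b) all Phase~II randomness used to sort the sublists that we will \emph{not} select below. Let $G$ be the event $\{X_{n,r} \ge s\}$, which lies in~$\cF$ since it depends only on the sublist lengths after Phase~I. On $G$, using any $\cF$-measurable canonical ordering of the sublists (for instance, by their left endpoint in the original list), designate the first $s$ sublists of length in $[r/2,r]$ as \emph{selected}, and call their lengths $r_1,\ldots,r_s$; on $G^c$ select nothing. Let $B$ be the total number of Phase~II comparisons used to sort the selected sublists (so $B=0$ on $G^c$), and set $A:=Q_n-B$.

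By construction, $A$ consists of (i) the Phase~I comparisons, which are $\cF$-measurable, and (ii) the Phase~II comparisons for the non-selected sublists, which are $\cF$-measurable by our inclusion of that randomness in~$\cF$; hence $A$ is $\cF$-measurable. On the event $G$, conditional on $\cF$ the only remaining randomness in $B$ is the fresh randomness used by the Phase~II calls of {\tt QuickSort} on the $s$ selected sublists. Since {\tt QuickSort} on disjoint sublists uses independent random pivots, and since a run on a list of length $r_i$ produces a number of comparisons distributed as $Q_{r_i}$, the conditional distribution of $B$ given $\cF$ on $G$ is exactly that of $B_1+\cdots+B_s$ with the $B_i$ independent and $B_i\Leq Q_{r_i}$, where $r/2\le r_i\le r$ by selection.

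It remains to bound $\Pr(G^c)$. Since $X_{n,r}$ is integer-valued, $X_{n,r}>n/(3r)$ implies $X_{n,r}\ge\lceil n/(3r)\rceil=s$, so $G^c\subseteq\{X_{n,r}\le n/(3r)\}$, and Lemma~\ref{expandA} gives $\Pr(G^c)\le e^{-n/(400r)}$, as required.

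There is no genuine obstacle here; the only subtle point is the design of~$\cF$: it must be rich enough that $A$ is measurable (which forces us to put the Phase~II randomness for the non-selected sublists into~$\cF$) but must exclude the Phase~II randomness for the selected sublists, so that the latter remains free and yields the desired conditional distribution of~$B$.
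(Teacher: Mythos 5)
Your proof is correct and follows essentially the same route as the paper: run {\tt QuickSort} in two phases, let $\cF$ reveal Phase~I and the Phase~II randomness for the non-selected sublists (the paper's ``Phase~IIa''), select $s$ sublists of length in $[r/2,r]$ when $X_{n,r}\ge s$, and invoke Lemma~\ref{expandA} to bound the exceptional probability. The only (cosmetic) difference is that the paper defines $A$ directly as the Phase~I plus Phase~IIa comparison count rather than as $Q_n-B$, and organizes the non-selected/selected sorting as explicit subphases, but the decomposition and the measurability argument are identical.
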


\begin{proof}
Run {\tt QuickSort} in two phases as above, and define $X_{n,r}$
as in Lemma~\ref{expandA}. Let $E$ be the event that $X_{n,r}\ge s = \lceil n/(3r) \rceil$,
so $\Pr(E)\ge 1 - e^{-n/(400r)}$ by Lemma~\ref{expandA}.
We now subdivide Phase II into two parts.
When $E$ holds, we select
$s$ sublists from the end of Phase I with length between $r/2$ and $r$, otherwise
we do not select any. In Phase IIa, we run {\tt QuickSort} on all sublists
\emph{except} the selected ones. In Phase IIb, we run {\tt QuickSort} on the
selected sublists.
Take the $\sigma$-algebra $\cF$ to be the $\sigma$-algebra corresponding to all the information uncovered in Phases I and IIa, and $A$ to be the total number
of comparisons made during Phases I and IIa. Take $B_i$, $i = 1, \dots, s$, to be (when~$E$ occurs) the number of comparisons involved in running {\tt QuickSort} on the $i$th selected
sublist.
\end{proof}

\subsection{Truncating the summands}

The sum of the $B_i$ above will roughly serve as our `binomial-like' distribution, but
we would like a little more information about it. Knowing that $B_i$ has `scale' roughly
$r_i\approx r$, we shall condition on $|B_i-\E B_i|$ being at most $2r_i$.
This will still keep a constant fraction of the variance, while giving us better control
on the distribution of the sum of such random variables.

Writing $q_n$ for $\E Q_n$, for $n \geq 1$ let $Q_n^*=(Q_n-q_n)/n$ denote the centered and normalized form of $Q_n$.
Since $Q_n^*$ converges in distribution to $Q$, a distribution with a continuous positive density on $\RR$,
we know that there are constants $n_0$ and $c_1>0$
such that for all $n\ge n_0$ we have, say, $\Pr(Q_n^*\in [-2,-1])\ge c_1$ and 
$\Pr(Q_n^*\in [1,2])\ge c_1$. Hence, for $n\ge n_0$,
\begin{equation}\label{Qn22}
 \Pr(Q_n^*\in [-2,2]) \ge 2c_1
\end{equation}
and, since $\Pr(Q_n^*\in I \mid Q_n^*\in [-2,2])\ge c_1/1=c_1$ for $I = [-2, 1]$ and $I = [1, 2]$, we have
\[
 \Var(Q_n^* \mid Q_n^* \in [-2,2]) \ge c_1.
\]
Let $W_n'$ denote the distribution of $Q_n^*$ conditioned to lie in $[-2,2]$,
and let $W_n := W_n' - \E W_n'$. Then $|W_n| \le 4$ and $\Var W_n\ge c_1$. We will record
the consequences for the unrescaled distribution of $Q_n$ immediately after the following definition.

\begin{definition}
\label{Drdef}
Given $r>0$ let $\cD_r$ denote the set of probability distributions
of random variables $X$ with the following properties: $\E X=0$, $|X|\le 4r$,
and $\Var X\ge c_1 (r/2)^2$.
\end{definition}

The calculations above have the following simple consequence: for any $r \ge 2 n_0$
and any $r'$ satisfying $r/2\le r'\le r$,
we have $\Pr(Q_{r'}\in [q_{r'}-2r',q_{r'}+2r'])\ge 2c_1$, and the conditional distribution
of $Q_{r'}$ given this event is of the form $z_{r'}+X_{r'}$ for some constant $z_{r'}$
and some $X_{r'}$ with law in $\cD_r$.

\begin{definition}\label{Brsdef}
Given $r>0$ and a positive integer $s$, let $\cB_{r,s}$ denote the set of 
%probability distributions of random variables $X$ which may be written as $X=X_1+\cdots+X_s$
%where each $X_i$ has law in $\cD_r$, and the $X_i$ are independent.
$s$-fold convolutions of distributions from $\cD_r$.
\end{definition}

In other words, $X$ has a distribution in $\cB_{r,s}$ if we can write
$X=X_1+\cdots+X_s$ where the $X_i$ are independent and each has law in $\cD_r$.
The distributions in $\cB_{r, s}$ will be the `binomial-like' ones we shall use in the smoothing argument.

\begin{remark*}
More properly we should write $\cD_{r,c_1}$ and $\cB_{r,s,c_1}$ for the classes
defined in Definitions~\ref{Drdef} and~\ref{Brsdef}.
In this paper we need only consider a particular value of $c_1$ as at the start 
of this subsection, but in other contexts one might consider these classes
for other values of $c_1$. The results below of course extend to this setting.
\end{remark*}

The next lemma, a simple consequence of Corollary~\ref{Cexpand}, will play a key role in our smoothing arguments.

\begin{lemma}\label{expand2}
There are positive constants $r_0$, $c_2$ and $c_3$
such that the following holds whenever $n$ and $r$ are positive integers with $r$ even
and $r_0\le r \le c_2n$:
we may write $Q_n=A+B$ where, for some $\sigma$-algebra $\cF$, we have that $A$
is $\cF$-measurable, and, with probability at least $1-e^{-c_3n/r}$, the conditional
distribution of $B$ given $\cF$ is in the class $\cB_{r,s}$, with $s=\ceil{c_2 n/r}$.
\end{lemma}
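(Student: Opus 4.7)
The plan is to start from the decomposition $Q_n=A_0+B_0$ given by Corollary~\ref{Cexpand} and then, for each of the $s_0$ small-{\tt QuickSort} summands, further condition on landing in its typical range $[q_{r_i}-2r_i,q_{r_i}+2r_i]$ so as to upgrade each law $Q_{r_i}$ to one in $\cD_r$. Take $r_0:=\max(20,2n_0)$ and $c_2\le 1/5$; the latter guarantees $n\ge 5r$ so that Corollary~\ref{Cexpand} applies, and the former ensures $r_i\ge r/2\ge n_0$ for every summand, so that the discussion preceding Definition~\ref{Brsdef} gives $\Pr(Q_{r_i}\in[q_{r_i}-2r_i,q_{r_i}+2r_i])\ge 2c_1$ uniformly in $i$.

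Apply Corollary~\ref{Cexpand} with the given $r$: on an event $E_0$ of probability at least $1-e^{-n/(400r)}$, the conditional law of $B_0$ given $\cF_0$ is that of $Y_1+\cdots+Y_{s_0}$ with $s_0=\lceil n/(3r)\rceil$, the $Y_i$ conditionally independent, and $Y_i\sim Q_{r_i}$ for some $\cF_0$-measurable $r_i\in[r/2,r]$. Set $\xi_i:=\mathbf{1}\{Y_i\in[q_{r_i}-2r_i,q_{r_i}+2r_i]\}$; conditionally on $\cF_0$ (on $E_0$) these are independent Bernoullis with means at least $2c_1$, and on $\{\xi_i=1\}$ we may write $Y_i=z_{r_i}+X_i$ with $z_{r_i}$ $\cF_0$-measurable and the conditional law of $X_i$ belonging to $\cD_r$. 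Let $N:=\sum_i\xi_i$ and choose $c_2:=c_1/6$, so that $s:=\lceil c_2 n/r\rceil\le c_1 s_0$ whenever $r\le c_2 n$. A standard Chernoff bound gives $\Pr(N<c_1 s_0\mid\cF_0,E_0)\le e^{-c' s_0}\le e^{-c'' n/r}$ for positive constants $c',c''$, so $\Pr(E_1^c)\le e^{-c_3 n/r}$ for a suitable $c_3>0$, where $E_1:=E_0\cap\{N\ge s\}$.

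On $E_1$ let $I$ be the set of the first $s$ indices with $\xi_i=1$ and set
\[
 B:=\sum_{i\in I} X_i,\qquad A:=Q_n-B;
\]
off $E_1$ set $B:=0$ and $A:=Q_n$. Take $\cF$ to be the $\sigma$-algebra generated by $\cF_0$, the $\xi_i$'s (and hence $I$), and the $Y_j$ for $j\notin I$. Then $A$ is $\cF$-measurable---on $E_1$ it equals $A_0+\sum_{j\notin I,\,j\le s_0}Y_j+\sum_{i\in I}z_{r_i}$---and, conditional on $\cF$, on $E_1$ the variable $B=\sum_{i\in I}X_i$ is a convolution of $s$ independent laws in $\cD_r$, hence lies in $\cB_{r,s}$.

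The only real obstacle is bookkeeping: the lemma demands a deterministic $s$ (depending only on $n,r$) while our conditioning on the $\xi_i$'s yields a random number $N$ of ``good'' summands. Resolving this by picking a canonical sub-collection of size $s$ and absorbing the leftover $Y_j$'s and the offsets $z_{r_i}$ into $A$ is what forces the particular choice of $\cF$ above, and is also what makes the Chernoff estimate on $N\ge s$ essential; otherwise every step is a direct appeal to Corollary~\ref{Cexpand} and the definition of $\cD_r$.
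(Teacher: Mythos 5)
Your proof is correct and follows essentially the same route as the paper's: start from Corollary~\ref{Cexpand}, introduce the indicator events for each summand falling in its typical window, apply a Chernoff bound to guarantee at least $s$ good summands, select a canonical subset of size $s$, and enlarge the $\sigma$-algebra to include the indicators and the discarded summands so that $A$ is measurable. The only cosmetic quibble is that you first impose $c_2\le 1/5$ and later set $c_2:=c_1/6$, which should of course be $c_2:=\min(1/5,c_1/6)$ to keep both constraints.
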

\begin{proof}
We start by taking $\cF'$, $A'$, and $B'$ to be as in Corollary~\ref{Cexpand}.
Let $E'\in \cF'$ be the event that we may write the conditional distribution of $B'$
as the sum of independent variables $B_1',\ldots,B_t'$, $t=\lceil n / (3r) \rceil$,
with $B_i'$ having (conditionally given $\cF'$) the distribution of $Q_{r_i}$ for
some $r/2\le r_i\le r$.
By Corollary~\ref{Cexpand} we have $\Pr(E')\ge 1-e^{-\Omega(n/r)}$.
We choose $c_2\le c_1/6$, and set $s = \lceil c_2 n / r \rceil$.
Note that $c_2n/r\ge 1$, so $s\le 2c_2n/r$.
We shall
reveal certain extra information as described in a moment. Let $E_i$ denote the event that
$B_i'\in [q_{r_i}-2r_i,q_{r_i}+2r_i]$,
and let $E$ denote the event that at least 
$s$ of the events $E_i$ occur. Each event $E_i$ has conditional probability at least $2c_1$
by \eqref{Qn22}. Since
the $E_i$'s are conditionally independent given $\cF'$, and $c_1t\ge 2c_2n/r\ge s$,
we see 
[from $\Pr(E\mid E') \geq \Pr(\Bi(t, 2 c_1) \geq c_1 t )$ and Chernoff's inequality] 
that $\Pr(E \mid E') \ge 1-e^{-\Omega(t)} = 1-e^{-\Omega(n/r)}$. Hence $\Pr(E)\ge 1-e^{-\Omega(n/r)}$.

The extra information we reveal is as follows:\ firstly, which $E_i$'s occur,
and hence whether $E$ occurs. When $E$ does occur, we let $\cI$ be the set of the first $s$
indices $i$ such that $E_i$ occurs, otherwise we may take $\cI=\emptyset$, say.
We now reveal the values of all $B_i'$, $i\notin \cI$, and set $B=\sum_{i\in\cI} (B_i'-z_{r_i})$.
Let $\cF\supset \cF'$ denote the $\sigma$-algebra generated by all information revealed so far.
Then $A=Q_n-B = A'+\sum_{i\in\cI} z_{r_i} + \sum_{i\notin \cI} B_i'$ is certainly $\cF$-measurable.
Also, when $E$ occurs, the conditional distribution of $B$ given $\cF$ is in $\cB_{r,s}$,
as required.
\end{proof}

\section{Properties of binomial-like distributions}\label{sec_binom}

In this section we establish some simple properties of distributions in the class $\cB_{r,s}$ without
aiming for tight bounds. The first property is asymptotic normality, which will give `smoothness'
on scales larger than $r$.
Here and in what follows all constants are absolute,
except in that they may depend on the absolute constant $c_1$ in the definition of $\cD_r$.

\begin{lemma}\label{Brs1}
For any random variable $X$ with distribution in $\cB_{r,s}$ we have $\Var X=\Theta(r^2 s)$
and
\[
 \Pr\bb{ X \le \E X + x \sqrt{\Var X}} = \Phi(x) + O(1/\sqrt{s}),
\]
where~$\Phi$ is the standard normal distribution function,
and the implicit constants depend only on the constant $c_1$ in Definition~\ref{Drdef}.
\end{lemma}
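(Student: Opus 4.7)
The plan is to observe that this is essentially a direct application of the Berry--Esseen bound (Theorem~\ref{BE}), once we control the variance and the third absolute moment of each summand.

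First, I would write $X=X_1+\cdots+X_s$ with the $X_i$ independent and each in $\cD_r$, so that $\E X_i=0$, $|X_i|\le 4r$, and $\Var X_i \ge c_1 r^2/4$. Independence gives $\Var X = \sum_{i=1}^s \Var X_i$, and the upper bound $\Var X_i = \E X_i^2 \le (4r)^2 = 16 r^2$ combined with the lower bound $\Var X_i \ge c_1 r^2/4$ yields $\Var X = \Theta(r^2 s)$ with implicit constants depending only on $c_1$. This also gives $\sigma := \sqrt{\Var X}$ of order $r\sqrt{s}$, again depending only on $c_1$.

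Next, I would bound the third absolute moment $\rho = \sum_{i=1}^s \E|X_i|^3$. Since $|X_i|\le 4r$ deterministically, $\E|X_i|^3 \le (4r)^3 = 64 r^3$, and therefore $\rho \le 64 r^3 s$. Combining with $\sigma^3 \ge (c_1/4)^{3/2} r^3 s^{3/2}$, we obtain
\[
\frac{\rho}{\sigma^3} \le \frac{64 r^3 s}{(c_1/4)^{3/2} r^3 s^{3/2}} = O(s^{-1/2}),
\]
the implicit constant depending only on $c_1$.

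Applying Theorem~\ref{BE} with $\mu=\E X$ now gives
\[
\sup_y \bigabs{ \Pr(X \le \mu + \sigma y) - \Phi(y) } \le A\rho/\sigma^3 = O(s^{-1/2}),
\]
which is exactly the desired statement after setting $y=x$. There is no real obstacle here: the only slightly non-trivial point is verifying that the implicit constants depend only on $c_1$ (and not on $r$ or $s$), and that follows because both the lower bound $\Var X_i \ge c_1 r^2/4$ and the uniform bound $|X_i|\le 4r$ scale with $r$ in the same way, so the ratio $\rho/\sigma^3$ is scale-invariant in~$r$.
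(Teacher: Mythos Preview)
Your proof is correct and is essentially the same as the paper's: a direct application of the Berry--Esseen theorem after bounding the variance and third absolute moment of each summand. The only cosmetic difference is that the paper first rescales by $r$ to reduce to the case $r=1$, whereas you keep $r$ explicit and observe the scale-invariance of $\rho/\sigma^3$ at the end.
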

\begin{proof}
Dividing $X$, and each of the $s$ summands $X_i$ comprising it, through by $r$, we may
assume without loss of generality that $r=1$. Then apply the Berry--Esseen Theorem
(Theorem~\ref{BE} above),
noting that each $X_i$ is bounded in absolute value by~$4$, and so has bounded third moment,
and that $\Var X$ is $\Theta(s)$
(under our assumption that $r = 1$).
\end{proof}

Next we establish a common tail bound for all distributions in the class $\cB_{r, s}$.
\begin{lemma}\label{Brstail}
There are constants $c > 0$ and $C$ such that, for all $X$ with distribution in $\cB_{r,s}$,
all $t\ge 0$, and all $\ell\ge r$ we have
\[
 \Pr(X\in [t,t+\ell]) \le \frac{C\ell}{r\sqrt{s}} e^{-ct^2/(r^2s)}.
\]
\end{lemma}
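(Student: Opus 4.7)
The plan is to use a Chernoff--style exponential tilting argument, which reduces the problem to a Berry--Esseen application on a tilted distribution while simultaneously extracting the Gaussian tail factor. By rescaling the $X_i$ by $r$ I may assume $r = 1$, so that $X = X_1 + \cdots + X_s$ with $X_i$ independent, $|X_i| \le 4$, $\E X_i = 0$, and $\Var X_i \ge c_1/4$.

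For $\lambda \ge 0$ set $M(\lambda) := \E e^{\lambda X} = \prod_i M_i(\lambda)$, and let $\Pr_\lambda$ denote the tilted measure with $d\Pr_\lambda/d\Pr = e^{\lambda X}/M(\lambda)$, with corresponding expectation $\E_\lambda$. Using $e^{-\lambda X} \le e^{-\lambda t}$ on the event $\{X \ge t\}$ we obtain the Chernoff-type inequality
\[
 \Pr(X \in [t, t+\ell]) \le M(\lambda)\, e^{-\lambda t}\, \Pr_\lambda(X \in [t, t+\ell]).
\]
For the first factor, Hoeffding's lemma gives $M_i(\lambda) \le e^{8\lambda^2}$ and hence $M(\lambda) \le e^{8\lambda^2 s}$; the choice $\lambda := t/(16s)$ then yields $M(\lambda) e^{-\lambda t} \le e^{-t^2/(32 s)}$. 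Since $|X| \le 4s$ we may restrict to $t \in [0, 4s]$, so $\lambda \in [0, 1/4]$. For the second factor, under $\Pr_\lambda$ the variable $X$ is again a sum of $s$ independent variables supported in $[-4, 4]$. The density ratio $e^{\lambda x}/M_i(\lambda)$ lies in $[e^{-2}, e^2]$ for $x \in [-4, 4]$ and $\lambda \in [0, 1/4]$, so the tilted variance of each summand satisfies
\[
 \inf_a \E_\lambda[(X_i - a)^2] \ge e^{-2} \inf_a \E[(X_i - a)^2] = e^{-2} \Var X_i \ge e^{-2} c_1/4.
\]
Hence the tilted total variance is $\Theta(s)$, and Theorem~\ref{BE} applied under $\Pr_\lambda$ gives $\Pr_\lambda(X \in [t, t+\ell]) = O(\ell/\sqrt{s})$, the Berry--Esseen error of $O(1/\sqrt{s})$ being absorbed into the main term thanks to $\ell \ge r = 1$. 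Combining the two factors and undoing the rescaling yields the bound with $c = 1/32$.

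The main obstacle is establishing the uniform lower bound on the tilted variance across all $X_i$ allowed by $\cD_r$ and all relevant $\lambda$: without this the Berry--Esseen error could dominate and the clean bound would fail. The density-ratio estimate above handles this cleanly, using only boundedness of $X_i$, the a priori lower bound on $\Var X_i$ built into $\cD_r$, and the restriction of $\lambda$ to a bounded interval. This is the one place in the argument where the full structure of $\cD_r$ (beyond boundedness and non-degenerate variance) really matters, although the verification is elementary. A purely naive combination of Lemma~\ref{Brs1} with Theorem~\ref{Azuma} fails because the flat Berry--Esseen error $O(1/\sqrt{s})$ has no Gaussian decay in $t$, and in the intermediate regime $\sqrt{s} \ll t \ll \sqrt{s \log s}$ neither Berry--Esseen nor Hoeffding alone gives a bound of the claimed shape; the tilt cures this by essentially re-centering the Berry--Esseen estimate at the point $t$.
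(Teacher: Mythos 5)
Your proof is correct, and it takes a genuinely different route from the paper's. You use the standard exponential-tilting (``Cram\'er--Chernoff plus local CLT'') strategy: tilt by $\lambda = t/(16s)$, extract the Gaussian factor $e^{-t^2/(32s)}$ from the moment-generating function via Hoeffding's lemma, and then apply Berry--Esseen under the tilted measure, where the interval $[t,t+\ell]$ has become central. The one non-routine step---showing the tilted summands still have variance $\Theta(1)$---you handle cleanly with the density-ratio bound $e^{\lambda x}/M_i(\lambda)\in[e^{-2},e^2]$, which works because both $|X_i|$ and $\lambda$ are bounded; this is essentially a special case of the paper's Lemma~\ref{tiltprep}, so your argument could in principle share machinery with the later Lemma~\ref{tiltlemma}. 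The paper instead avoids tilting altogether by a short splitting trick: write $X=Y+Z$ with $Y,Z$ i.i.d.\ in $\cB_{r,s/2}$, note that $X\ge t$ forces $Y\ge t/2$ or $Z\ge t/2$, and then bound $\Pr(Y\ge t/2)$ by Azuma--Hoeffding while bounding $\sup_x\Pr(Z\in[x,x+\ell])$ by Berry--Esseen (Lemma~\ref{Brs1}); conditioning on $Y$ lets these two estimates simply multiply. The paper's argument is shorter and needs no tilted-variance control; yours yields an explicit constant $c=1/32$ and, as you observe, makes transparent why the bound has the ``Gaussian tail $\times$ anti-concentration'' shape, by re-centering Berry--Esseen at $t$. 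Both are complete proofs.
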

\begin{proof}
%The Hoeffding--Azuma
The Azuma--Hoeffding inequality gives that, uniformly for $Y$ with distribution in $\cB_{r,s}$,
we have
\begin{equation}\label{HAY}
 \Pr(Y\ge t)\le e^{-\Omega(t^2/(r^2s))}.
\end{equation}
Separately, for any interval $I$ of length $\ell\ge r$, we have
\begin{equation}\label{YI}
 \Pr(Y\in I) = O(\ell/(r\sqrt{s})).
\end{equation}
Indeed, writing $Y'$ for a Gaussian with the same mean and variance as $Y$,
by Lemma~\ref{Brs1} we have $\Pr(Y\in I)=\Pr(Y'\in I)+O(1/\sqrt{s})$. As $Y'$ has variance
$\Theta(r^2s)$ we have $\Pr(Y'\in I)=O(\ell/(r\sqrt{s}))$, and the error term
is absorbed into the main term by the lower bound on $\ell$. Somewhat surprisingly, the proof can be completed by
multiplying these two bounds.

Indeed, in proving the claimed result, adjusting the constants if needed, we may assume that
$s$ is even.
Then we may write $X=Y+Z$ where $Y$ and $Z$ are independent and have distributions
in the class $\cB_{r,s/2}$. Let $I=[t,t+\ell]$ with $t\ge 0$. Since $X\ge t$ implies either
$Y\ge t/2$ or $Z\ge t/2$, we may write
\begin{equation}\label{tt2}
 \Pr(X\in I) \le \Pr\bb{ Y+Z\in I,\  Y\ge t/2} + \Pr\bb{Y+Z\in I,\  Z\ge t/2}.
\end{equation}
We bound the first term from above by 
\begin{eqnarray*}
 \Pr(Y\ge t/2)\Pr(Y+Z\in I \mid Y\ge t/2) &\le& \Pr(Y\ge t/2) \sup_y \Pr(Y+Z\in I\mid Y=y) \\
 &=& \Pr(Y\ge t/2) \sup_x \Pr(Z\in [x,x+\ell]).
\end{eqnarray*}
The final quantity is 
%$O\bigl(e^{-ct^2/(2r^2s)}\ell/(r\sqrt{s})\bigr)$
$e^{-\Omega(t^2/(r^2s))} \times O(\ell/(r\sqrt{s}))$ 
by \eqref{HAY} and \eqref{YI}. The second term in \eqref{tt2} 
may be bounded in the same way.
\end{proof}

\subsection{A tilting lemma}

In proving Lemma~\ref{Brs1} we applied the Berry--Esseen Theorem to distributions from $\cB_{r, s}$; next we shall apply the same result to exponential tilts of these distributions, to prove the following result. In what follows, $c_1$ is the constant appearing
in Definition~\ref{Drdef}.

\begin{lemma}\label{tiltlemma}
Let $K>0$ be constant. There exists a constant $C'=C'(c_1,K)$ such that
the following holds whenever
\begin{equation}\label{tlassump}
 \lambda\ge 1, \quad m\ge \ell\ge C'r \quad \hbox{and} \quad \frac{\lambda m}{r\sqrt{s}} \le K.
\end{equation}
Let~$X$ be a random variable with distribution in~$\cB_{r, s}$, and let $I_1$ and $I_2$ be
subintervals, each of length~$\ell$, of an interval~$J$ of length~$m$ with
$J\subset [ - K \lambda r\sqrt{s}, K \lambda r\sqrt{s}]$.
Then
\[
 \Pr(X\in I_2) = \Pr(X\in I_1) \left(1+O\left( \frac{r }{ \ell} + \frac{\lambda m}{r\sqrt{s}} \right) \right).
\]
\end{lemma}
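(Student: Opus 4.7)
The approach is exponential tilting. A direct Berry--Esseen bound on $\Pr(X\in I_i)$ has error $O(1/\sqrt s)$, which swamps the main term when $J$ lies in the tail of $X$ (i.e., $\lambda\gg 1$). To bypass this, I would first change measure so that the tilted distribution is centred inside $J$, and only then apply Berry--Esseen to the tilted sum.

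Write $X=X_1+\cdots+X_s$ with independent $X_i$ having laws in $\cD_r$. For $\theta\in\RR$ let $M_i(\theta):=\E e^{\theta X_i}$, $M(\theta):=\prod_i M_i(\theta)$, and let $\tilde X_i$ have density $e^{\theta x}/M_i(\theta)$ with respect to the law of $X_i$, with $\tilde X:=\sum_i\tilde X_i$ a sum of independent copies. The standard tilting identity reads
\[
\Pr(X\in A)=M(\theta)\,\E\!\left[e^{-\theta\tilde X}\mathbf{1}_{\tilde X\in A}\right].
\]
A Taylor expansion of $\log M_i(\theta)$, using $|X_i|\le 4r$ and $\Var X_i\ge c_1 r^2/4$, produces a constant $c_0=c_0(c_1)>0$ such that, uniformly in $|\theta r|\le c_0$ and in the laws of the $X_i$, the function $\mu(\theta):=\E\tilde X$ is strictly increasing with $\mu'(\theta)=\Var\tilde X=:\tilde\sigma^2(\theta)=\Theta(r^2 s)$. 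Taking $a^*$ to be the midpoint of $J$, I would choose $\theta$ so that $\mu(\theta)=a^*$; then $|\theta|=O(|a^*|/(r^2s))=O(\lambda/(r\sqrt s))$, and the hypotheses $\lambda m/(r\sqrt s)\le K$ and $m\ge C'r$ give $|\theta r|=O(\lambda/\sqrt s)=O(K/C')\le c_0$ once $C'=C'(K,c_1)$ is large enough.

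Write $I_i=[a_i,a_i+\ell]$. The tilting identity and monotonicity of $e^{-\theta x}$ give, for each $i$, some $\tau_i\in I_i$ with $\Pr(X\in I_i)=M(\theta)\,e^{-\theta\tau_i}\,\Pr(\tilde X\in I_i)$, so
\[
\frac{\Pr(X\in I_2)}{\Pr(X\in I_1)}=e^{\theta(\tau_1-\tau_2)}\cdot\frac{\Pr(\tilde X\in I_2)}{\Pr(\tilde X\in I_1)}.
\]
The tilt prefactor equals $1+O(\lambda m/(r\sqrt s))$, since $|\tau_1-\tau_2|\le m$ and $|\theta|m=O(\lambda m/(r\sqrt s))=O(K)$. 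For the tilted ratio I would apply Berry--Esseen (Theorem~\ref{BE}) to $\tilde X$: since $|\tilde X_i|\le 4r$, $\sum_i\E|\tilde X_i-\E\tilde X_i|^3=O(sr^3)$, which combined with $\tilde\sigma=\Theta(r\sqrt s)$ gives error $O(1/\sqrt s)$. Letting $\phi$ denote the Gaussian density with mean $a^*$ and variance $\tilde\sigma^2$, and taking $\tau_i'$ to be the midpoint of $I_i$ (via the MVT applied to $\int_{I_i}\phi$), this yields $\Pr(\tilde X\in I_i)=\ell\,\phi(\tau_i')(1+O(r/\ell))$, since $\phi(\tau_i')=\Theta(1/(r\sqrt s))$ converts the additive $O(1/\sqrt s)$ into a relative $O(r/\ell)$. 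Finally
\[
\frac{\phi(\tau_2')}{\phi(\tau_1')}=\exp\!\left(\frac{(\tau_1'-a^*)^2-(\tau_2'-a^*)^2}{2\tilde\sigma^2}\right)=1+O(m^2/(r^2s)),
\]
and, since $\lambda\ge 1$ and $\lambda m/(r\sqrt s)\le K$, $m^2/(r^2s)=(m/(r\sqrt s))^2\le(K/\lambda)(m/(r\sqrt s))\le K\lambda m/(r\sqrt s)$; combining the three factors yields the claimed ratio. The main technical point is the uniform two-sided bound $\mu'(\theta)=\Theta(r^2s)$ over all laws of the $X_i$ in $\cD_r$ and all $|\theta r|\le c_0$; the rest is routine arithmetic with the stated hypotheses.
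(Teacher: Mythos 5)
Your argument is essentially identical to the paper's: exponentially tilt the distribution so that the tilted mean falls inside $J$, apply Berry--Esseen to the tilted sum, and compare the two Gaussian approximations, with the variance and mean-derivative lower bounds for tilted laws in $\cD_r$ (your $\mu'(\theta)=\Theta(r^2 s)$) corresponding to the paper's Lemma~\ref{tiltprep}. The only cosmetic differences are that you centre the tilt at the midpoint of $J$ rather than at the left endpoint of $I_1$, and you keep $r$ general where the paper rescales to $r=1$.
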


The proof of Lemma~\ref{tiltlemma} will be based on standard exponential tilting arguments similar to those used around Lemma 6.4 in~\cite{norm2}.
Let $Y$ be a random variable with bounded support. Then for any $\alpha\in \RR$
we may define the \emph{tilted distribution} $\cL(Y^{(\alpha)})$ by
\begin{equation}\label{tiltdef}
 \Pr( Y^{(\alpha)} \in \dx) = \Pr(Y \in \dx) \frac{e^{\alpha x}}{\gamma},
\end{equation}
where $\gamma = \gamma(\cL(Y), a) = \E e^{\alpha Y}$; here $\cL(Y)$ denotes the law, or distribution, of the random variable~$Y$.  

Before starting the proof of Lemma~\ref{tiltlemma}, we establish some elementary
properties of tilted versions of distributions with law in the set $\cD_1$
defined in Definition~\ref{Drdef}. 

\begin{lemma}\label{tiltprep}
There is a constant $c>0$, depending only on $c_1$, such that for any $Y$ with $\cL(Y)\in \cD_1$
and any $\alpha\in [-1,1]$ we have $\Var Y^{(\alpha)}\ge c$.
Furthermore, 
\begin{equation}\label{dEYa}
 \dda \E Y^{(\alpha)} \ge c
\end{equation}
whenever $|\alpha|\le 1$.
\end{lemma}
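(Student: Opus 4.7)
The plan is to observe that the two claims in Lemma~\ref{tiltprep} are in fact the same statement, via the standard identity
\[
 \dda \E Y^{(\alpha)} = \Var Y^{(\alpha)},
\]
valid for any tilted family indexed by a bounded random variable $Y$. To see this, I would simply write $\E Y^{(\alpha)} = \gamma(\alpha)^{-1}\E[Y e^{\alpha Y}]$ with $\gamma(\alpha)=\E e^{\alpha Y}$, note that $\gamma'(\alpha)=\E[Y e^{\alpha Y}]$, differentiate, and recognize the result as $\E[(Y^{(\alpha)})^2]-(\E Y^{(\alpha)})^2$; the differentiation under the expectation is harmless because $|Y|\le 4$. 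This reduces both parts of the lemma to a uniform lower bound on $\Var Y^{(\alpha)}$ over $\alpha\in[-1,1]$.

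To produce that lower bound I would use symmetrization. Let $Y'$ be an independent copy of $Y$, so that $\Var Y^{(\alpha)} = \tfrac12 \E\bb{(Y^{(\alpha)}-Y'^{(\alpha)})^2}$. By the defining formula \eqref{tiltdef}, the joint law of $(Y^{(\alpha)},Y'^{(\alpha)})$ has Radon--Nikodym derivative $e^{\alpha(x+y)}/\gamma^2$ with respect to the joint law of $(Y,Y')$, so
\[
 \Var Y^{(\alpha)} = \frac{1}{2\gamma^2}\,\E\bb{(Y-Y')^2 e^{\alpha(Y+Y')}}.
\]
Since $|Y|,|Y'|\le 4$ and $|\alpha|\le 1$, the exponential factor in the integrand is at least $e^{-8}$, while $\gamma=\E e^{\alpha Y}\le e^4$. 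Combining these crude bounds with $\E[(Y-Y')^2]=2\Var Y \ge c_1/2$ (from the definition of $\cD_1$ with $r=1$) yields $\Var Y^{(\alpha)}\ge e^{-16}c_1/4$, a strictly positive constant depending only on $c_1$, as required. The identity above then promotes this to \eqref{dEYa}.

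I do not foresee any real obstacle. The whole argument is a handful of elementary lines once the cumulant-generating-function identity $\dda \E Y^{(\alpha)}=\Var Y^{(\alpha)}$ is invoked, and the numerical constant emerges by simply plugging in $|Y|\le 4$. The only minor care needed is to record that the boundedness of $Y$ makes $\gamma(\alpha)$ smooth, bounded away from $0$ and $\infty$ on $[-1,1]$, and justifies moving the derivative inside the expectation.
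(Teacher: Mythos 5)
Your proposal is correct, and the symmetrization step is a genuinely different (and tidier) route to the variance lower bound than the one in the paper. The paper's proof first establishes the lower bound on $\Var Y^{(\alpha)}$ by a more hands-on argument: it shows that $Y$ places probability at least an explicit constant on $\{Y > \tfrac14\sqrt{c_1}\}$ and on $\{Y<0\}$, observes that the tilting kernel $e^{\alpha x}/\gamma$ is bounded above and below by $e^{\pm 8}$, and concludes that $Y^{(\alpha)}$ still has two well-separated regions of nontrivial mass, hence bounded-below variance; it then verifies the cumulant identity $\dda \E Y^{(\alpha)}=\Var Y^{(\alpha)}$ by the quotient rule, exactly as you do. Your version replaces the probability-extraction step by the identity $\Var Y^{(\alpha)}=\tfrac12\E\bb{(Y^{(\alpha)}-Y'^{(\alpha)})^2}$ together with the observation that the joint Radon--Nikodym factor $e^{\alpha(x+y)}/\gamma^2$ lies in $[e^{-16},e^{16}]$, so that $\Var Y^{(\alpha)}\ge e^{-16}\Var Y\ge e^{-16}c_1/4$. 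This is more economical, avoids the slightly awkward ``without loss of generality'' branch in the paper, and makes the ``bounded distortion preserves variance'' intuition literal, at the cost of a somewhat worse numerical constant --- which, as the paper notes, is irrelevant. The arithmetic and the differentiation-under-the-expectation justification are both sound given $|Y|\le 4$.
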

\begin{proof}
The first statement is intuitively clear: we take a distribution whose variance is bounded from
below, and `distort it' by a bounded amount, so the variance will still be bounded from below.
We spell out a concrete argument, not aiming for the best possible bound.

Let~$Y$ have distribution in $\cD_1$.  Then, recalling Definition~\ref{Drdef}, for any $b > 0$ we have
\[
 \tfrac{1}{4} c_1 \leq \E Y^2 \leq b^2 \Pr(Y^2 \leq b^2) + 16 \Pr(Y^2 > b^2) \leq b^2 + 16 \Pr(|Y| > b).
\]
Take $b = \frac{1}{4} \sqrt{c_1}$.  Then
\[
 \Pr(|Y| > \tfrac{1}{4} \sqrt{c_1}) \geq \tfrac{1}{16} (\tfrac{1}{4} - \tfrac{1}{16}) c_1 
 = \tfrac{3}{256} c_1 > \tfrac{1}{100} c_1.
\]
Without loss of generality we may thus assume that
\[
 \Pr(Y > \tfrac{1}{4} \sqrt{c_1}) >  \tfrac{1}{200} c_1.
\]
Since $\E Y=0$ and $Y$ is supported on $[-4,4]$, it follows that
\[
 \Pr(Y < 0) > \tfrac{1}{4} \cdot \tfrac{1}{4} \sqrt{c_1} \cdot \tfrac{1}{200} c_1 =\tfrac{1}{3200} c_1^{3/2}.
\]
Since $Y$ is supported on $[-4,4]$, for $|\alpha|\le 1$ we have $\gamma=\E e^{\alpha Y}\le e^4$,
while $e^{\alpha x}$ is at least $e^{-4}$ for all $x$ in the support of $Y$.
Hence,
\[
 \Pr(Y^{(\alpha)} > \tfrac{1}{4} \sqrt{c_1}) \ge \frac{e^{-4}}{\gamma} \Pr(Y> \tfrac{1}{4}\sqrt{c_1}) 
 \ge e^{-8} \tfrac{1}{200} c_1.
\]
Similarly, 
\[
 \Pr(Y^{(\alpha)} < 0) \ge e^{-8} \tfrac{1}{3200} c_1^{3/2}.
\]
The last two bounds clearly imply a lower bound on $\Var Y$ that depends only on $c_1$.

To establish \eqref{dEYa}, note that
\[
 \E Y^{(\alpha)} = \frac{ \E(Ye^{\alpha Y}) }{ \E(e^{\alpha Y}) },
\]
so by the quotient rule,
\[
 \dda \E Y^{(\alpha)} = \frac{ \E(Y^2 e^{\alpha Y}) }{ \E(e^{\alpha Y}) } -
\left( \frac{ \E(Y e^{\alpha Y}) }{ \E(e^{\alpha Y}) } \right)^2
 = \Var Y^{(\alpha)}.
\]
\end{proof}

\begin{proof}[Proof of Lemma~\ref{tiltlemma}.]
Let $X = X_1 + \cdots + X_s$ have distribution in $\cB_{r,s}$.
We aim to bound $\Pr(X \in I_2) / \Pr(X \in I_1)$, where $I_1$ and $I_2$ are intervals of length~$\ell$ both contained in a common interval $J$ of length~$m$.  By rescaling (considering $\ell / r$ and $m / r$ in place of~$\ell$ and~$m$) we may assume without loss of generality that $r = 1$.

A simple calculation shows that if we tilt each $X_i$ by the same parameter $\alpha$,
then the independent sum of $X_1^{(\alpha)},\ldots,X_s^{(\alpha)}$ has the same distribution
as $X^{(\alpha)}$. By Lemma~\ref{tiltprep} we thus have
\begin{equation}\label{dda}
 \dda \E X^{(\alpha)}  = \dda \sum_{i=1}^s \E X_i^{(\alpha)}  \ge cs
\end{equation}
for $\alpha\in [-1,1]$.
Let 
\[ 
 I_j = [t_j,t_j+\ell]
\]
for $j=1,2$. Since $r=1$ and $m\ge C'r=C'$ by \eqref{tlassump}, we have
\[
 \frac{|t_1|}{s} \le \frac{K\lambda\sqrt{s}}{s} = \frac{K\lambda}{\sqrt{s}} \le \frac{K\lambda m}{C'\sqrt{s}}\le \frac{K^2}{C'},
\]
using the third assumption in \eqref{tlassump} in the last step.
Hence, choosing $C'$ large enough, we have $|t_1|\le cs$.
Since $\E X^{(0)}=\E X=0$, it follows from \eqref{dda} that
there is a unique value $a\in [-1,1]$ such that
\[
 \E X^{(a)} = t_1.
\]
Moreover, we have
\begin{equation}\label{abound}
 a = O(t_1/s) = O( \lambda s^{-1/2} ).
\end{equation}

From now on we fix this tilting parameter, writing $X_i'$ for $X_i^{(a)}$,
and $X'$ for the independent sum $X_1' + \cdots + X_s'$.
As noted above, $X'$ has the same distribution as $X^{(a)}$.
In other words, 
\[
 \Pr(X' \in \dx) = \Pr(X \in \dx) \frac{e^{a x}}{\gamma},
\]
where $\gamma = \E e^{a X}$ is independent of $x$.
Since $I_1$ and $I_2$ lie in an interval $J$ of length $m$, it follows easily that
\[
 \frac{\Pr(X'\in I_2)}{\Pr(X'\in I_1)}
 = \frac{\Pr(X\in I_2)}{\Pr(X\in I_1)} e^{O(am)}.
\]
Now by \eqref{abound} and \eqref{tlassump},
\[
  a m =O(\lambda ms^{-1/2}) = O(1),
\]
so the $e^{O(am)}$ term is $1+O(am)$. Hence
\begin{equation}\label{ratios}
  \frac{\Pr(X \in I_2)}{\Pr(X \in I_1)}  =  \frac{\Pr(X' \in I_2)}{\Pr(X' \in I_1)} 
\left(1+ O\left(\frac{\lambda m}{s^{1/2}}\right) \right).
\end{equation}
It remains to bound the ratio $\Pr(X' \in I_2)/\Pr(X' \in I_1)$.

Like the distribution of $X_i$, the distribution of $X_i'$ is supported on the interval $[-4, 4]$.
Hence $\Var X_i'=O(1)$. But by the first part of Lemma~\ref{tiltprep}, $\Var X_i'\ge c$,
so $\Var X_i'=\Theta(1)$. Also, the absolute third moment $\E|X_i'|^3$ is clearly
at most $4^3=O(1)$.
The implicit constants in these estimates depend only on $c_1$ and $K$, not on $i$.
Hence,
\begin{align*}
 \mu &:= \E X' = t_1, \\
 \sigma^2 &:= \Var X' = \sum_{i = 1}^s \Var X_i' = \Theta(s), \\
 \rho &:= \sum_{i =1}^s \E |X_i'|^3 = O(s).
\end{align*}
Let us note for later that since $\lambda\ge 1$ and $\lambda m/\sqrt{s}=O(1)$ we have
\[
 m \le \lambda m = O(\sqrt{s}) = O(\sigma),
\]
and hence
\begin{equation}\label{ms2}
 \frac{m^2}{\sigma^2} = O\left(\frac{m}{\sigma}\right) = O\left(\frac{m}{s^{1/2}}\right)
 = O\left(\frac{\lambda m}{s^{1/2}}\right).
\end{equation}

Let $Z \sim {\rm N}(0, 1)$ be a standard normal random variable.
By the Berry--Esseen Theorem (Theorem~\ref{BE}), we have
\[
 \bigl| \Pr(X'\in I) - \Pr(\mu + \sigma Z \in I) \bigr| \le \frac{2A\rho}{\sigma^3}.
\]

Now by definition $t_1=\mu$, and by assumption $I_1$ and $I_2$ are contained
in an interval of length $m$. Hence, for any $y\in I_1\cup I_2$ we have
\begin{equation}\label{ymu}
 |y-\mu|\le m = O(\sigma).
\end{equation}
It follows that for $j=1,2$ we have
\[
 \Pr(\mu+\sigma Z \in I_j) = 
 \Pr\left(Z \in \left[ \frac{t_j - \mu}{\sigma}, \frac{t_j -\mu + \ell}{\sigma} \right] \right) 
 = \Theta(1) \times \frac{\ell}{\sigma} = \Theta\left(\frac{\ell}{s^{1/2}}\right).
\]
Since $A\rho/\sigma^3 = O(s^{-1/2})$, we thus have
\[
 \Pr(X'\in I_j) =  \Pr(\mu+\sigma Z \in I_j) (1+O(1/\ell)).
\]
The implicit constant here does not depend on $C'$.
Recalling that $\ell\ge C'r=C'$, choosing $C'$ large enough, the $1+O(1/\ell)$ factor here is at least $1/2$,
and it follows by dividing the bound for $j=2$ by that for $j=1$ that
\begin{equation}\label{ratios2}
 \frac{\Pr(X'\in I_2)}{\Pr(X'\in I_1)} = \frac{\Pr(\mu+\sigma Z \in I_2)}{\Pr(\mu+\sigma Z \in I_1)}
  (1+O(1/\ell)).
\end{equation}

Let $\phi(x)=(2\pi)^{-1/2}e^{-x^2/2}$ be the density function
of the standard normal variable. Since $\phi(x)=\phi(0)e^{O(x^2)}$, from \eqref{ymu} we have
\[
 \Pr(\mu+\sigma Z \in I_j) =  \Pr\left(Z \in \left[ \frac{t_j - \mu}{\sigma}, \frac{t_j -\mu + \ell}{\sigma} \right] \right) 
 = \frac{\ell \phi(0)}{\sigma} e^{O(m^2/\sigma^2)}.
\]
Hence,
\begin{equation}\label{ratio3}
 \frac{\Pr(\mu+\sigma Z \in I_2)}{\Pr(\mu+\sigma Z \in I_1)} = e^{O(m^2/\sigma^2)}
 = 1+O(m^2/\sigma^2) = 1+  O\left(\frac{\lambda m}{s^{1/2}}\right),
\end{equation}
recalling \eqref{ms2}. Together, \eqref{ratios}, \eqref{ratios2} and \eqref{ratio3}
complete the proof.
\end{proof}

\section{The core smoothing argument}\label{sec_core}

In this section we prove an ungainly lemma (Lemma~\ref{core}), which is the heart of the smoothing argument.
In this lemma there are many parameters; in the next section we give a simple
choice of parameters that allows us to prove Theorem~\ref{th1}. The reason for keeping
the greater generality here is that it seems (to us) to give a better picture of 
why the method works, and may help in applying the method in other contexts.

So far, it has not mattered whether the intervals we consider are open, closed
or half-open. However, in the arguments below, at certain points we will need
to partition one interval into disjoint intervals of the same type. 
For this reason, from now on we consider only half-open intervals of the form $(a,b]$.

The following definition is key to our smoothing arguments.
Recall that $q_n=\E Q_n$, and that $(Q_n-q_n)/n\dto Q$,
where $Q$ has a continuous positive density function~$f$ on $\RR$.
Given an integer $n\ge 1$
and positive real numbers $m$, $\eps$, and $\Gamma \ge 1$,
we say that the statement $S(n, m, \eps, \Gamma)$ holds\footnote{%
We could work with a statement $S(n, m, L, \eps, \Gamma)$ where, in condition~(i) only,
the interval $I$ is restricted to lie within $[q_n-L,q_n+L]$. In Lemmas~\ref{core}
and~\ref{corebin}, the `input' value of $L$ could be anything (at least $m$),
and the `output' value of $L$ would be the same as the input. Nothing would
change in the proofs.}
if

\medskip
(i) for any half-open interval $I\subset\RR$ of length $m$ and any $x\in \RR$ such that $q_n+nx\in I$
we have $|\Pr(Q_n\in I)-\frac{m}{n}f(x)|\le \eps\frac{m}{n}$, and

\medskip
(ii) for any half-open interval $I$ of length $m$ we have $\Pr(Q_n\in I)\le \Gamma \tfrac{m}{n}$.

\medskip\noindent
Roughly speaking, the fact that $(Q_n-q_n)/n\dto Q$ implies that $S(n, m, \eps, \Gamma)$
holds for some $m=m(n)=o(n)$, some $\eps=\eps(n)\to 0$,
and some constant $\Gamma$. We seek to show that [property~(i) of] $S(n, 1, \eps', \Gamma')$
holds for some slightly larger $\eps' = \eps'(n)$ and $\Gamma'$.

\begin{lemma}\label{core}
There exist positive constants $c$, $C'$, $C$ and $r_0$
such that the following holds.
If the statement $S(n, m, \eps, \Gamma)$ holds, $\ell$ divides $m$,
and there exist real numbers $r$ and $\lambda\ge 1$
such that
\begin{equation}\label{coreconds}
 r_0\le r\le cn, \qquad m\ge \ell\ge C'r \qquad\hbox{and}\quad \lambda m \le \sqrt{r n}, 
\end{equation}
then $S(n, \ell, \eps', \Gamma')$ holds for any $\eps'\ge \eps+ \Gamma \eta$ and 
$\Gamma' \ge \Gamma (1 + \eta)$, where
\begin{equation}\label{etaform}
 \eta = C\left( e^{-c\lambda^2} + \frac{\lambda m}{\sqrt{rn}} + \frac{r}{\ell} + \frac{n}{\ell} e^{-c n/r}\right).
\end{equation}
\end{lemma}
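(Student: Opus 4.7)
The plan is a bootstrap argument that will reduce both parts of $S(n,\ell,\eps',\Gamma')$ to a single comparison between probabilities of neighbouring length-$\ell$ subintervals of a length-$m$ interval. Using Lemma~\ref{expand2} to decompose $Q_n=A+B$ with $B\mid\cF\in\cB_{r,s}$ on a high-probability event (here $s=\ceil{c_2n/r}$, so $r\sqrt{s}=\Theta(\sqrt{rn})$), I will control the ``typical'' part of the decomposition via Lemma~\ref{tiltlemma}, and the ``atypical'' part by a shift that converts a tail of the binomial-like variable~$B$ into a $Q_n$-event on a \emph{length-$\ell$} interval. The resulting inequality will be self-referential in $M':=\sup\{\Pr(Q_n\in I):I\text{ is a half-open interval of length }\ell\}$, and solving it will give (i) and (ii) together.

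Fix a half-open length-$\ell$ interval $I$ with $q_n+nx\in I$, embed it in a half-open length-$m$ interval~$J$, and partition $J=I_1\sqcup\dots\sqcup I_k$ into $k=m/\ell$ pieces of length~$\ell$, with $I=I_{j_0}$. Set $P_j:=\Pr(Q_n\in I_j)$ and $\bar P:=k^{-1}\sum_j P_j=(\ell/m)\Pr(Q_n\in J)$. Part (i) of the hypothesis $S(n,m,\eps,\Gamma)$ gives $\bar P=(\ell/n)f(x)+O(\eps\,\ell/n)$ and part (ii) gives $\bar P\le\Gamma\ell/n$, so both parts of the desired $S(n,\ell,\eps',\Gamma')$ will follow from the pair of bounds
\[
|P_j-\bar P|\le C\Gamma\eta\,\ell/n\quad\text{and}\quad P_j\le\bar P\bigl(1+C\eta\bigr).
\]

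Apply Lemma~\ref{expand2} to write $Q_n=A+B$, with the event $E\in\cF$ on which $B\mid\cF\in\cB_{r,s}$ having probability at least $1-e^{-c_3n/r}$; its complement contributes $O(e^{-c_3n/r})$ to each $P_j$, absorbed into the $(n/\ell)e^{-cn/r}$-term of~$\eta$. Set $T:=K\lambda r\sqrt{s}$, with $K$ a constant large enough that Lemma~\ref{tiltlemma} applies, and call $A$ \emph{typical} if $J-A\subseteq[-T,T]$. On the typical event, Lemma~\ref{tiltlemma} applies to the pair $I_j-A,\,I_{j'}-A\subseteq J-A$ (its three numerical hypotheses are precisely $\lambda\ge 1$, $\ell\ge C'r$, and $\lambda m\le\sqrt{rn}$) and yields, pointwise in~$\cF$,
\[
\Pr(B\in I_{j'}-A\mid\cF)=\Pr(B\in I_j-A\mid\cF)\bigl(1+O(\eta_1)\bigr),\qquad\eta_1:=\tfrac{r}{\ell}+\tfrac{\lambda m}{\sqrt{rn}},
\]
so that the typical contributions $\mathrm{typ}_j$ to $P_j$ satisfy $|\mathrm{typ}_j-\overline{\mathrm{typ}}|\le C\eta_1\bar P$. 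For $A$ in the right-atypical regime $A\le\min(J)-T$, each translate $I_j-A$ lies in $[T,\infty)$, and a Gaussian-tail / tilting comparison (of the same type used in the proofs of Lemmas~\ref{Brstail} and~\ref{tiltlemma}) gives
\[
\Pr(B\in I_j-A\mid\cF)\le e^{-c\lambda^2}\Pr(B\in I_j-A-T\mid\cF)=e^{-c\lambda^2}\Pr(Q_n\in I_j-T\mid\cF);
\]
the crucial point is that shifting~$B$ by exactly~$T$ turns the $B$-event into a $Q_n$-event on the length-$\ell$ interval $I_j-T$. Integrating and symmetrically treating the left-atypical regime gives $\mathrm{atyp}_j\le 2e^{-c\lambda^2}M'$.

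Combining the two pieces, $P_j\le\bar P(1+C\eta_1)+Ce^{-c\lambda^2}M'+O(e^{-c_3n/r})$. Since this argument applies to every length-$\ell$ interval (with its own enclosing length-$m$ interval) and $\bar P\le\Gamma\ell/n$ uniformly, taking the supremum over~$I$ and choosing the constant~$C$ in~$\eta$ large enough that $Ce^{-c\lambda^2}\le\tfrac12$ lets me solve for $M'$ to obtain $M'\le\Gamma(\ell/n)(1+O(\eta))$, which is (ii). Feeding this back into the atypical estimate turns it into $\mathrm{atyp}_j\le C\Gamma\eta\,\ell/n$, and adding the typical estimate gives $|P_j-\bar P|\le C\Gamma\eta\,\ell/n$, which is (i). The hard part is the atypical step: a naive shift by $T/2$, or direct use of Lemma~\ref{Brstail}, only gives $\mathrm{atyp}_j\le e^{-c\lambda^2}\Gamma m/n$ via (ii) at scale~$m$, which would force an unwanted $m/\ell$ factor into~$\eta$. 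Shifting by exactly~$T$ so that the remainder is a genuine $Q_n$-probability on a length-$\ell$ interval, and then closing the argument via the self-referential bootstrap for~$M'$, is what removes this factor.
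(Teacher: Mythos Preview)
Your overall architecture (partition $J$, handle the typical part via Lemma~\ref{tiltlemma}, then deal with the tail) is right, and the self-referential bootstrap on $M'$ is a nice idea. But the crucial inequality in your atypical step,
\[
\Pr(B\in I_j-A\mid\cF)\ \le\ e^{-c\lambda^2}\,\Pr(B\in I_j-A-T\mid\cF),
\]
is not justified by anything in the paper and is not obviously true. You are asserting a pointwise \emph{lower} bound on $\Pr(B\in[t-T,t-T+\ell])$ in terms of $\Pr(B\in[t,t+\ell])$, valid for \emph{every} $t\ge T$. Lemma~\ref{Brstail} only gives an upper bound on tail-interval probabilities; Lemma~\ref{Brs1} gives usable lower bounds only for intervals within $O(\sigma)$ of the mean. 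When $t$ is much larger than $T$, both intervals sit far in the tail, and for generic laws in $\cB_{r,s}$ (neither i.i.d., symmetric, nor log-concave) there is no available control on the ratio. A tilting argument with a \emph{fixed} tilt $\alpha$ reduces you to bounding $\Pr(B^{(\alpha)}\in[t,t+\ell])/\Pr(B^{(\alpha)}\in[t-T,t-T+\ell])$ uniformly in $t\ge T$, which has exactly the same difficulty; letting $\alpha$ depend on~$t$ forces you back into a shell-by-shell analysis. So this step is a genuine gap, and the bootstrap built on it does not close.

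The paper avoids this altogether. Instead of shifting by the fixed amount~$T$ and comparing length-$\ell$ to length-$\ell$, it slices the tail into shells $E_{2,y}^{\pm}$ (indexed by $y\ge\lambda$), and on each shell compares the length-$\ell$ interval $I_2-A$ with a \emph{length-$m$} interval $J_y=J-y\sigma$ whose translate $J_y-A$ lies within $[-2\sigma,2\sigma]$. On that central range Lemma~\ref{Brs1} does give the needed lower bound $\Pr(B\in J_y-A\mid\cF)=\Omega(m/\sigma)$, so one gets
\[
\Pr(Q_n\in I_2\mid\cF)\ \le\ \Pr(Q_n\in J_y\mid\cF)\,O\!\bigl(\tfrac{\ell}{m}e^{-cy^2}\bigr),
\]
and after integrating and invoking the scale-$m$ hypothesis $\Pr(Q_n\in J_y)\le\Gamma m/n$, the factor $\ell/m$ cancels the~$m$ to yield $O(\Gamma\ell/n\cdot e^{-cy^2})$; summing over $y\ge\lambda$ gives the $e^{-c\lambda^2}$ term with no extra $m/\ell$ factor and no bootstrap needed. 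In other words, the ``naive'' route you dismiss is not the paper's route: the paper's length-$m$ comparison is exactly what lets the scale-$m$ bound (ii) do the work your unproved inequality was meant to do.
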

Of course, we could replace $C$ and $C'$ by a single constant $\max\{C,C'\}$, but they
play very different roles in the proof, so we keep them separate.
As we shall see later, the key terms on the right in~\eqref{etaform} are the second and third; we can choose $\lambda = \log n$, say, and then the key conditions to keep $\eta$ small are (roughly stated) that
\begin{equation}\label{key}
 r \ll \ell \le m \ll \sqrt{r n}.
\end{equation}

\begin{proof}
Let $J\subset \RR$ be any interval of length $m$, and let $I_1$ and $I_2$ be subintervals of $J$
with length $\ell$ such that $\Pr(Q_n\in I_1)$ is minimal and $\Pr(Q_n\in I_2)$ is maximal.
We shall show that
\begin{equation}\label{mainaim}
 \Pr(Q_n\in I_2)-\Pr(Q_n\in I_1) \le \eta \Gamma \ell/n.
\end{equation}
Assuming this for the moment, let us show that the lemma follows. To establish property~(i)
of $S(n,\ell,\eps',\Gamma')$, let $I$ be any interval of length $\ell$,
and choose an interval $J$ of length $m$ with $I\subset J$.
Let $x$ be such that $q_n+nx\in I$ and define $I_1$ and $I_2$ as above. By definition,
$\Pr(Q_n\in I_1)\le \Pr(Q_n\in I)\le \Pr(Q_n\in I_2)$. Also,
since $J$ can be partitioned into intervals of length $\ell$,
by simple averaging we have
\begin{equation}\label{IJI}
 \Pr(Q_n\in I_1)\le \frac{\ell}{m} \Pr(Q_n\in J) \le \Pr(Q_n\in I_2).
\end{equation}
By assumption $\Pr(Q_n\in J)$ is within $\eps m/n$ of $f(x)m/n$.
From~\eqref{mainaim} and~\eqref{IJI}
it follows that $\Pr(Q_n\in I_1)$ and $\Pr(Q_n\in I_2)$, and hence $\Pr(Q_n\in I)$,
are within $(\eps \ell / n) + (\eta \Gamma \ell / n) \le \eps' \ell / n$ of $f(x)\ell/n$, as required.

The argument for property (ii) is very similar.  Given any interval $I$ of length $\ell$,
find an interval $J$ of length $m$ containing it, and define $I_1$ and $I_2$ as above.
This time, by assumption, $\Pr(Q_n \in J) \le \Gamma m / n$, so (by averaging)
$\Pr(Q_n \in I_1) \le \Gamma \ell / n$. But then $\Pr(Q_n \in I) \le \Pr(Q_n \in I_2) \le (\Gamma \ell / n) +(\eta \Gamma \ell / n) \le \Gamma' \ell / n$, as required.

It remains to prove \eqref{mainaim}.
Let $r$ and $\lambda\ge 1$ satisfy \eqref{coreconds}.
Increasing~$r$ slightly if necessary (and adjusting the constants in the lemma appropriately),
we may assume that $r$ is an even integer. Let $s=\ceil{c_2n/r}$ where
$c_2$ is as in Lemma~\ref{expand2}.
Write $Q_n=A+B$ where $A$, $B$, and the $\sigma$-algebra $\cF$ are
as in Lemma~\ref{expand2}. The idea is to condition on $\cF$ and use the fact that,
with very high probability, $B$ has conditional distribution in $\cB_{r,s}$ to show that
$\Pr(Q_n\in I_1)$ and $\Pr(Q_n\in I_2)$
are similar. Let $\sigma=\sqrt{rn}$. Since $s=\Theta(n/r)$,
we have $r\sqrt{s}=\Theta(\sigma)$, so by the first part of Lemma~\ref{Brs1},
$\Var[B\mid \cF]=\Theta(r^2 s)=\Theta(\sigma^2)$ (with very high probability).
It will be crucial that $m\ll \sigma$, so that $I_1$ and $I_2$
are not too far apart on the scale over which $B$ varies,
but that $\ell\gg r$.

In the following proof, all statements hold provided $c$ is small enough and $C$ is large enough.
Let $E$ be the event that the conditional distribution of $B$ given $\cF$ is indeed
in $\cB_{r,s}$, so that by Lemma~\ref{expand2}
\begin{equation}\label{Ec}
 \Pr(E)\ge  1-e^{-cn/r}.
\end{equation}
Let $E_1$ be the event that $E$ occurs and the midpoint of $I_2-A$ 
lies in $[-\lambda\sigma,\lambda\sigma]$.
Note that $I_2$ is deterministic, and $A$ is $\cF$-measurable, so $E_1\in \cF$.
Suppose first that $E_1$ occurs. Since $I_1$ and $I_2$ are both subsets of~$J$,
an interval of length $m\le\sigma/\lambda\le \lambda\sigma$,
we have that $I_1-A$ and $I_2-A$ are both contained in $\{x:|x|\le 2\lambda\sigma\}$.
Note that $2\lambda\sigma=O(\lambda r\sqrt{s})$. Also,
$\lambda m/(r\sqrt{s})=O(\lambda m/\sqrt{rn})=O(1)$ by \eqref{coreconds}.
Hence, the conditions of Lemma~\ref{tiltlemma} hold for some constant $K$,
provided we take $C'\ge C'(c_1,K)$. 
When $E_1$ occurs, it thus follows from Lemma~\ref{tiltlemma} that
\begin{equation}\label{B12}
 \Pr(B\in I_2-A\mid \cF) = \Pr(B\in I_1-A\mid \cF) [1+O(\eta_1)],
\end{equation}
where
\[ 
 \eta_1 = (r / \ell) + (\lambda m/ \sigma).
\]

Since $E_1$ is $\cF$-measurable, and $Q_n\in I_i$ if and only if $B\in I_i-A$,
taking the expectation of both sides of \eqref{B12} it follows that
\[
 \Pr\bb{\{Q_n\in I_2\} \cap E_1} = \Pr\bb{\{Q_n\in I_1\} \cap E_1} [1+O(\eta_1)],
\]
so
\begin{equation}\label{c1}
  \Pr\bb{\{Q_n\in I_2\} \cap E_1} - \Pr\bb{\{Q_n\in I_1\} \cap E_1} 
 = O(\eta_1)\Pr(Q_n\in I_1) =O(\eta_1 \Gamma \ell/n),
\end{equation}
since $\Pr(Q_n\in I_1)\le \frac{\ell}{m}\Pr(Q_n\in J)\le \Gamma \frac{\ell}{n}$.

We now consider the `tail case', where $I_2-A$ (and hence $I_1-A$) is far from the mean (zero)
of $B$. We split this case further according to how far.

Assuming purely for convenience that $\lambda$ is an integer, for each integer $y\ge \lambda$
let $E_{2,y}^+$ be the event that $E$ occurs and the midpoint of $I_2-A$ lies in 
$(y\sigma, (y+1)\sigma]$. Similarly, let $E_{2,y}^-$ be the event that $E$ holds
and the midpoint of $I_2-A$ lies in $[-(y+1)\sigma,-y\sigma)$.
Note that
\[
 E = E_1 \cup \bigcup_{y\ge \lambda} E_{2,y}^+ \cup \bigcup_{y\ge \lambda} E_{2,y}^-,
\]
that this union is disjoint, and that all the events involved are $\cF$-measurable.

Fix some $y\ge \lambda$ and suppose that $E_{2,y}^+$ holds. (The argument for $E_{2,y}^-$
will be essentially identical, of course.)
Because $I_2$ has length at most $\sigma$, we see that the left-endpoint of $I_2-A$
is at least $(y-\frac{1}{2})\sigma\ge y\sigma/2$.
%, say. 
Hence, by Lemma~\ref{Brstail},
\begin{equation}\label{I2u}
 \Pr(Q_n\in I_2\mid \cF) = \Pr( B \in I_2-A \mid \cF) \le C \frac{\ell}{\sigma} e^{-c y^2},
\end{equation}
after increasing $C$ and decreasing $c$ if necessary.
Let $J_y=J-y\sigma$, an interval of length $m$ containing $I_2-y\sigma$. Note that (for a given $y$)
the interval $J_y$ is deterministic.
Now, when $E_{2,y}^+$ holds,
$J_y-A$ is an interval of length $m$ contained (recalling $m \leq \sigma$) in $[-2\sigma,2\sigma]$.
%, say.
By Lemma~\ref{Brs1} it follows that
\[
 \Pr(Q_n\in J_y\mid \cF) = 
\Pr( B\in J_y-A \mid \cF) = \Theta(\mbox{$\frac{m}{\sigma}$}) -O(1/\sqrt{s}).
\]
The implicit constants here depend on $c$, but not on $C'$.
Recalling our assumption $m\ge C'r$, we may thus
choose $C'$ large enough to ensure that the $O(1/\sqrt{s})$ error term is at most half the main term
$\Theta(m/\sigma)=\Theta(m/(r\sqrt{s}))$,
so
\begin{equation}\label{Jl}
 \Pr(Q_n\in J_y\mid \cF) = \Omega(\tfrac{m}{\sigma}).
\end{equation}
Combining \eqref{I2u} and \eqref{Jl}, we see that when $E_{2,y}^+$ holds, then
\begin{equation}\label{BEtail}
 \Pr(Q_n\in I_2\mid \cF) \le \Pr(Q_n\in J_y\mid \cF) O\bb{\tfrac{\ell}{m}e^{-c y^2}}.
\end{equation}
Since $E_{2,y}^+$ is $\cF$ measurable, it follows that
\begin{multline*}
 \Pr\bb{\{Q_n\in I_2\}\cap E_{2,y}^+}
 \leq O\bb{\tfrac{\ell}{m}e^{-c y^2}} \Pr\bb{\{Q_n\in J_y\}\cap E_{2,y}^+} \\
 \leq O\bb{\tfrac{\ell}{m}e^{-c y^2}} \Pr(\{Q_n\in J_y\}) \leq O\bb{\tfrac{\Gamma\ell}{n}e^{-c y^2}},
\end{multline*}
using property (ii) of the statement $S(n, m, \eps, \Gamma)$ for the last step.
A similar bound holds for $E_{2,y}^-$. Since $\sum_{y\ge \lambda} e^{-c y^2} = O(e^{-c\lambda^2})$,
summing we conclude that
\begin{equation}\label{c2}
 \Pr\bb{ \{Q_n\in I_2\} \cap (E\setminus E_1) } = O\bb{\tfrac{\Gamma \ell}{n} e^{-c\lambda^2}}.
\end{equation}

Finally, recalling \eqref{Ec},
\begin{equation}\label{c3}
 \Pr\bb{ \{Q_n\in I_2\} \cap E^\cc } \le \Pr(E^\cc) \le e^{-cn/r}.
\end{equation}
From \eqref{c1}, \eqref{c2}, and \eqref{c3} we conclude that
\[
 \Pr(Q_n\in I_2) \le \Pr(Q_n\in I_1) + \tfrac{\eta \Gamma \ell}{n}
\]
for some $\eta$ that satisfies
\[
 \eta = O\left( \eta_1+e^{-c\lambda^2}+\frac{n}{\Gamma\ell}e^{-cn/r} \right) 
 = O\left( e^{-c\lambda^2} + \frac{\lambda m}{\sqrt{rn}} + \frac{r}{\ell} + \frac{n}{\ell} e^{-c n/r}\right),
\]
recalling for the last step that $\Gamma \ge 1$ by assumption and that $\sigma=\sqrt{rn}$
by definition. This completes the proof of \eqref{mainaim} and thus of the lemma.
\end{proof}

Lemma~\ref{core} will do most of the work for us, but there is a snag. In applying it,
we need to assume $r\ll \ell$. Since we must 
have $r\ge 1$ this means we cannot hope to get down to $\ell=1$ with this method. The fundamental
problem is using the Berry--Esseen Theorem (as in the proof of Lemma~\ref{tiltlemma}) to try to get a good bound on the 
probability that an integer-valued random variable is in an interval of length
$1$---since the assumptions of 
the theorem do not distinguish between intervals such as $[k,k+1]$ (which contains two integers)
and $[k-1/2,k+1/2]$ (which contains one),
we can't hope for a good approximation in this way. The solution in this case was outlined near the start
of the paper: for this part of the argument we work not with a binomial-like distribution,
but with a binomial distribution. Then we can calculate the relevant probabilities
directly, avoiding the approximation in the Berry--Esseen Theorem.
This is captured in Lemma~\ref{corebin} below, whose proof is a variant of the proof
of Lemma~\ref{core}. Before coming to this lemma, we give the decomposition result
that we shall need.

\begin{lemma}\label{expandbin}
There are constants $c>0$ and $n_0$ such that for any $n\ge n_0$ we may write
$Q_n=A+B$ where, for some $\sigma$-algebra $\cF$, we have that $A$
is $\cF$-measurable, and with probability at least $1-e^{-cn}$,
the conditional distribution of $B$ given $\cF$ is the binomial distribution $\Bi(\ceil{cn},2/3)$.
\end{lemma}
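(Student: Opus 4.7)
The plan is to continue Phase~I past the point considered in Lemma~\ref{expandA}, pivoting sublists of length at least~$4$ until every remaining sublist has length at most~$3$, and then to exploit a special property of QuickSort on length-$3$ lists. Sorting a list of three distinct elements uses $2$ comparisons to split off the two non-pivot elements and then a further comparison if and only if the pivot is not the median; since the pivot is uniform among the three, the total number of comparisons is $2 + X$ with $X \sim \mathrm{Bernoulli}(2/3)$, and these Bernoulli variables are independent across disjoint sublists. Thus, if we can guarantee that the modified Phase~I leaves at least $m := \lceil cn\rceil$ length-$3$ sublists with probability $\ge 1 - e^{-cn}$, then conditioning on Phase~I together with the QuickSort output on all non-selected sublists will yield the desired binomial conditional distribution $\Bi(m,2/3)$.

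Let $Y_n$ be the number of length-$3$ sublists at the end of the modified Phase~I, and set $\eta_n := \E Y_n$. Repeating the reasoning of Lemma~\ref{expandA}, $Y_n$ obeys the recurrence $Y_n \Leq Y_{U_n-1} + Y^*_{n-U_n}$ for $n \ge 4$, with initial conditions $Y_0=Y_1=Y_2=0$ and $Y_3=1$, so that $\eta_n = \tfrac{2}{n}\sum_{i=0}^{n-1}\eta_i$ for $n \ge 4$. A short induction yields the closed form $\eta_n = (n+1)/10$ for every $n \ge 4$. The key consequence is that for every $\ell \ge 4$ and every $1\le k\le \ell$, the quantity $\eta_{k-1}+\eta_{\ell-k}-\eta_\ell$ is bounded in absolute value by a universal constant: the closed form makes the sum telescope whenever $k-1,\ell-k \ge 4$, and the remaining cases involve only finitely many small values.

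Model the modified Phase~I as a sequence of at most $n$ active splitting steps, let $\cF_t$ denote the $\sigma$-algebra generated by the first $t$ steps, and consider the Doob martingale $M_t := \E[Y_n \mid \cF_t]$. When step $t$ splits a sublist of length $\ell \ge 4$ uniformly, producing sublists of lengths $k-1$ and $\ell-k$, the martingale increment equals exactly $\eta_{k-1}+\eta_{\ell-k}-\eta_\ell$ and is therefore uniformly bounded. The Azuma--Hoeffding inequality (Theorem~\ref{Azuma}) then yields $\Pr(Y_n \le n/20) \le e^{-\Omega(n)}$, since $\eta_n \ge n/10$ for $n$ sufficiently large. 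Choose $c>0$ small enough that $\lceil cn\rceil \le n/20$ and that the resulting Azuma tail bound is at most $e^{-cn}$. On the good event $G := \{Y_n \ge m\}$, where $m := \lceil cn \rceil$, select $m$ of the length-$3$ sublists by any rule measurable in the Phase~I data; off $G$ select none. Let $\cF$ be generated by Phase~I together with the outcome of running QuickSort on all non-selected sublists (which is all of Phase~II when $G$ fails), and set $A$ equal to the total number of comparisons in Phase~I and in those runs, plus $2m\cdot \mathbf{1}_G$. Then $A$ is $\cF$-measurable, $B := Q_n - A$ equals $0$ off $G$, and on $G$ equals the sum of $m$ independent $\mathrm{Bernoulli}(2/3)$ variables conditional on $\cF$, so $B$ has conditional distribution $\Bi(m,2/3)$ as required. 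The only delicate point along the way is checking the uniform boundedness of the martingale increments, which is however immediate from the closed form for $\eta_n$.
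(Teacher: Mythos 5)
Your proof is correct and is essentially the approach the paper has in mind: the paper's proof notes that size-$3$ {\tt QuickSort} uses $2+\mathrm{Bernoulli}(2/3)$ comparisons and then asserts that ``a simple variant of the proof of Lemma~\ref{expandA}'' produces $\lceil cn\rceil$ size-$3$ sublists with probability $1-e^{-cn}$, which is exactly the Doob-martingale/Azuma argument you carried out with $\eta_n=(n+1)/10$ playing the role of $\xi_n=(n+1)/(r+1)$. The only point worth noting is that, unlike the paper's proof of Lemma~\ref{expandA}, you apply Azuma over all $n$ steps rather than first truncating to $O(n/r)$ active steps; since here $r$ is a constant this loses nothing and is a legitimate simplification.
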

Of course, this lemma can be rephrased to say that there are independent random variables
$A$ and $B$, with $B\sim \Bi(\ceil{cn},2/3)$, such that with probability $1-e^{-cn}$ we have $Q_n=A+B$.
We keep the wording above to strengthen the analogy to Lemmas~\ref{expandA} and~\ref{expand2}.
\begin{proof}
For $n=3$, {\tt QuickSort} either needs two comparisons (if the initial pivot happens to be the middle
element) or, with probability $2/3$, three comparisons.
A simple variant of the proof of Lemma~\ref{expandA} shows that if $c>0$ is small enough,
then with probability at least $1-e^{-cn}$ we may partially expand the execution
tree of {\tt QuickSort} run on a list of $n$ elements so as to leave $\ceil{cn}$ instances of {\tt QuickSort}
of size $3$. We take $B$ to be the number of comparisons in these instances minus $2\ceil{cn}$.
\end{proof}

\begin{lemma}\label{corebin}
There exist constants $n_0$, $c$ and $C$ such that the following holds.
Let $n\ge n_0$ be an integer and let $m\ge 1$ and $\lambda\ge 1$ be real numbers such that
\begin{equation}\label{condbin}
 \lambda m\le \sqrt{n} \quad\hbox{ and }\quad \lambda\le c\sqrt{n}/20.
\end{equation}
If $S(n, m, \eps, \Gamma)$ holds, then so does $S(n, 1, \eps', \Gamma')$
for any $\eps' \ge \eps + \Gamma \eta$ and $\Gamma' \ge \Gamma (1 + \eta)$,
where
\begin{equation}\label{etabin}
 \eta = C\left( e^{-c\lambda^2} + \frac{\lambda m}{\sqrt{n}} + n e^{-cn} \right).
\end{equation}
\end{lemma}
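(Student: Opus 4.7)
The plan is to follow the proof of Lemma~\ref{core} almost line-for-line, swapping in Lemma~\ref{expandbin} for Lemma~\ref{expand2} and replacing the tilting argument of Lemma~\ref{tiltlemma} by a direct, elementary computation of ratios of binomial point probabilities. As in Lemma~\ref{core}, fix a half-open interval $J$ of length $m$ and let $I_1,I_2\subset J$ be length-$1$ subintervals on which $\Pr(Q_n\in I_i)$ is minimal and maximal respectively; it suffices to show
\[
 \Pr(Q_n\in I_2)-\Pr(Q_n\in I_1)\le \eta\Gamma/n,
\]
since the deduction of both parts of $S(n,1,\eps',\Gamma')$ from this bound is identical to the corresponding step in Lemma~\ref{core}. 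Apply Lemma~\ref{expandbin} to write $Q_n=A+B$ with $A$ measurable with respect to some $\sigma$-algebra $\cF$ and, on an event $E\in\cF$ with $\Pr(E)\ge 1-e^{-cn}$, the conditional distribution of $B$ given $\cF$ equal to $\Bi(N,2/3)$ with $N=\ceil{cn}$. Set $\sigma:=\sqrt{\Var \Bi(N,2/3)}=\Theta(\sqrt{n})$ and partition $E$ into a bulk part $E_1$ (the midpoint of $I_2-A$ lies within $\lambda\sigma$ of $\E B=2N/3$) and tail parts $E_{2,y}^\pm$ for integers $y\ge\lambda$, exactly as in Lemma~\ref{core}.

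For the bulk contribution, since $Q_n$, $A$ and $B$ are all integer-valued we have $\Pr(Q_n\in I_i\mid\cF)=\Pr(B=b_i\mid\cF)$ for integers $b_1,b_2$ with $|b_2-b_1|\le m$. Writing $k=\E B+u$, the explicit ratio
\[
 \frac{\Pr(B=k+1)}{\Pr(B=k)}=\frac{2(N-k)}{k+1}=1-\frac{3u+1}{2N/3+u+1}=1+O\!\left(\frac{|u|+1}{n}\right)
\]
is valid uniformly for $|u|=O(\sqrt{n})$. On $E_1$, and at every $k$ along the telescoping path from $b_1$ to $b_2$ (using $m\le\sigma$, which follows from $\lambda m\le\sqrt{n}$), we have $|u|=O(\lambda\sqrt{n})$; telescoping at most $m$ such factors and using $\lambda m/\sqrt{n}=O(1)$ yields
\[
 \frac{\Pr(B=b_2\mid\cF)}{\Pr(B=b_1\mid\cF)}=1+O\!\left(\frac{\lambda m}{\sqrt{n}}\right),
\]
the binomial analogue of Lemma~\ref{tiltlemma}. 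Combined with $\Pr(Q_n\in I_1)\le\Gamma/n$ (averaging from property~(ii) of $S(n,m,\eps,\Gamma)$), this produces the middle term $\lambda m/\sqrt{n}$ of \eqref{etabin}.

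The tail argument is essentially the same as in Lemma~\ref{core}, with Chernoff/Hoeffding bounds for $\Bi(N,2/3)$ replacing Lemma~\ref{Brstail} and a standard local estimate (via Stirling) replacing Lemma~\ref{Brs1}: one obtains $\Pr(Q_n\in I_2\mid\cF)=O(\sigma^{-1}e^{-cy^2})$ on $E_{2,y}^+$ and $\Pr(Q_n\in J_y\mid\cF)=\Omega(m/\sigma)$ for $J_y:=J-y\sigma$, feeds these through the same chain of inequalities, applies property~(ii) of $S(n,m,\eps,\Gamma)$ to $J_y$, and sums over $y\ge\lambda$ to get $O(\Gamma e^{-c\lambda^2}/n)$. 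The contribution $\Pr(\{Q_n\in I_2\}\cap E^\cc)\le e^{-cn}$ accounts for the final $ne^{-cn}$ term, using $\Gamma\ge 1$. The second hypothesis $\lambda\le c\sqrt{n}/20$ in \eqref{condbin} is precisely what keeps $|u|\le(\lambda+y)\sigma$ well below $N$ throughout the tail summation, so that the Chernoff and local estimates remain valid. The main obstacle is the bulk ratio computation: one must confirm carefully that along the entire telescoping path the centered parameter $u$ stays $O(\lambda\sqrt{n})$ and that the accumulated multiplicative error is genuinely $O(\lambda m/\sqrt{n})$ rather than something larger, which is exactly where both hypotheses of \eqref{condbin} enter.
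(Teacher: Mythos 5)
Your proposal follows the paper's proof essentially line for line: same binomial decomposition from Lemma~\ref{expandbin}, same explicit ratio computation $\Pr(B=k+1)/\Pr(B=k)$ in the bulk, same partition of $E$ into $E_1$, $E_{2,y}^\pm$ and $E^\cc$, and the same bookkeeping producing the three terms of \eqref{etabin}. The one place you cut a corner that the paper does not is the reduction of $S(n,1,\eps',\Gamma')$ to the two-point comparison: you assert this deduction is ``identical to the corresponding step in Lemma~\ref{core}'', but the averaging step there relies on $\ell\mid m$ to partition $J$ into disjoint length-$\ell$ blocks, and for $\ell=1$ that requires $m$ to be an integer, which is not assumed. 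The paper flags this as a ``small twist'' and handles it by choosing, for a given integer $x$, an interval $J$ of length $m$ containing $x$ with $\ceil{m}$ integers in it (so by averaging over those integers one finds $x_1\in J$ with $\Pr(Q_n=x_1)\le\Pr(Q_n\in J)/\ceil{m}\le\Gamma/n$, yielding the upper bounds), and a second interval $J'$ with $\floor{m}$ integers for the lower bound in property~(i). A smaller quibble: the hypothesis $\lambda\le c\sqrt{n}/20$ is used in the \emph{bulk} argument -- it keeps the telescoping path from $b_1$ to $b_2$ inside the window $|k-ps|\le 2\lambda\sqrt{n}\le s/10$ where the ratio estimate $1+O(\lambda/\sqrt{n})$ holds -- rather than in the tail summation as you suggest; in the tail, large $y$ either makes the point probability zero or is controlled by the decaying factor $e^{-cy^2}$ without any appeal to the second hypothesis.
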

\begin{proof}
We shall show that whenever $x_1$ and $x_2$ are integers with $|x_2-x_1|\le m$, then
\begin{equation}\label{mainaim2}
 \Pr(Q_n=x_2) - \Pr(Q_n=x_1) \le \eta \Gamma / n.
\end{equation}
The result then follows roughly as in the proof of Lemma~\ref{core}; since there
is a small twist to deal with non-integer $m$, we briefly outline the argument.

First, to establish property (ii) of $S(n,1,\eps',\Gamma')$, let $x$ be any integer,
and pick an interval $J\subset\RR$ of length $m$ containing $x$, with
$\ceil{m}$ integers in $J$. By averaging, there is some $x_1\in J$ such that
\[
 \Pr(Q_n=x_1)\le \Pr(Q_n\in J)/\ceil{m}\le \Pr(Q_n\in J)/m \le \Gamma/n,
\]
using the assumption $S(n,m,\eps,\Gamma)$ in the last step.
Applying \eqref{mainaim2} with $x_2=x$ gives $\Pr(Q_n=x)\le \Gamma'/n$, as required.
For property (i), using the same $J$ and the bound $\Pr(Q_n\in J)\le (f(y)+\eps)m/n$
where $y=(x-q_n)/n$ gives $\Pr(Q_n=x)\le (f(y)+\eps')/n$.
For the lower bound, consider an interval $J'$ of length $m$ containing $x$,
with $\floor{m}$ integers in $J$.
Then find $x_2\in J$ with $\Pr(Q_n=x_2)\ge \Pr(Q_n'\in J)/m$ and apply
\eqref{mainaim2} with $x_1=x$.

It remains to prove \eqref{mainaim2}.
We follow the proof of Lemma~\ref{core}, but replacing the distribution of class $\cB_{r,s}$
by a binomial distribution $\Bi(s,p)$ where $s=\Theta(n)$ and $p=2/3$. (Any $p$ bounded
away from $0$ and $1$ would work.)
The existence of the relevant decomposition is given by Lemma~\ref{expandbin};
let $s=\ceil{cn}$ where $c$ is as in that lemma, and
let $E$ be the event that the conditional distribution of $B$ is indeed $\Bi(s,2/3)$,
so $\Pr(E)\ge 1-e^{-cn}$.

Suppose that~$E$ occurs.  Then for $0 \le k \le s - 1$ we have
\begin{equation}\label{binrat1}
 \frac{\Pr(B=k+1 \mid \cF)}{\Pr(B=k \mid \cF)} = \frac{\tbinom{s}{k+1}}{\tbinom{s}{k}} \frac{p}{1-p}
 = \frac{s-k}{k+1}\frac{p}{1-p}
 = \frac{1-(k/s)}{1-p} \frac{p}{(k/s)+(1/s)}.
\end{equation}
If $s/100\le k\le 99s/100$, say, then it follows that
\[
 \frac{\Pr(B=k+1 \mid \cF)}{\Pr(B=k \mid \cF)}  = 1+O\bb{|(k / s) - p| + (1 / s)}.
\]
Recalling that $s=\Theta(n)$, when $k$ is within $2\lambda\sqrt{n} \le c n / 10 \le s / 10$ of $p s$ this gives $\Pr(B=k+1 \mid \cF)/\Pr(B=k \mid \cF) = 1+O(\lambda/\sqrt{n})$ (uniformly in such~$k$).
It follows that if $k_1$ and $k_2$ satisfy $|k_i-ps|\le 2\lambda\sqrt{n}$ and
$|k_2-k_1|\le m$, then
\begin{equation}\label{binsim}
 \Pr(B=k_2 \mid \cF)=\Pr(B=k_1 \mid \cF)\bb{1+O(\lambda m/\sqrt{n})}.
\end{equation}
Let $E_1$ be the event that $E$ occurs and $x_2-A$ is within $\lambda\sqrt{n}$ of the mean $ps$ of $B$.
Since $m\le \sqrt{n}/\lambda \le\lambda\sqrt{n}$, this implies that $|(x_1-A)-ps|\le 2\lambda\sqrt{n}$,
say. Using \eqref{binsim} we see that when $E_1$ holds,
then
\[
 \Pr(Q_n=x_2\mid \cF)= \Pr(Q_n=x_1\mid \cF) \bb{1+O(\lambda m/\sqrt{n})}
\]
and it follows [arguing as for \eqref{c1}] that
\begin{equation}\label{binc1}
 \Pr\bb{\{Q_n=x_2\}\cap E_1} -  \Pr\bb{\{Q_n=x_1\}\cap E_1} = O\bb{(\lambda m/\sqrt{n}) \Gamma/n}.
\end{equation}

As in the proof of Lemma~\ref{core}, for integer $y\ge\lambda$ let $E_{2,y}^+$ be the event that
$E$ occurs and $x_2-A$ lies in $(ps+y\sigma, ps+(y+1)\sigma]$, where now $\sigma=\sqrt{n}$.
By an elementary calculation [using \eqref{binrat1} as a starting point, for example],
letting $k=x_2-A$ and recalling that $s=\Theta(n)$, there exist positive constants $c'$ and $c$
such that whenever $E_{2,y}^+$ holds then
\begin{multline*}
 \Pr(B=x_2-A\mid \cF) = \Pr(\Bi(s,p)=k) \\
 \le \Pr(\Bi(s,p)=\floor{ps}) e^{-c' y^2\sigma^2/s} \le C n^{-1/2} e^{-cy^2}.
\end{multline*}
As before (compare the definition of $J_y$ in the proof of Lemma~\ref{core}), let $J_y$ be an interval of length $m$ containing $x_2-y\sigma$. Then, when $E_{2,y}^+$ holds,  $J_y-A$
is an interval of length $m$ contained in $[ps-2\sigma,ps+2\sigma]$, say, and it follows
using elementary properties of the binomial distribution that
\[
 \Pr(Q_n\in J_y\mid \cF) = \Pr(\Bi(s,p)\in J_y-A\mid \cF) = \Theta(m/\sigma) = \Theta(m/\sqrt{n}).
\]
It follows that when $E_{2,y}^+$ holds, then
\begin{equation}\label{bintail}
 \Pr(Q_n=x_2 \mid \cF) \le \Pr(Q_n\in J_y\mid \cF) O\bb{\tfrac{1}{m}e^{-c y^2}},
\end{equation}
the analogue of \eqref{BEtail}.
The rest of the proof follows exactly that of Lemma~\ref{core}; we omit the details,
noting only that the error terms arise as follows: $e^{-c\lambda^2}$ from \eqref{bintail}
[just as from \eqref{BEtail}], $\lambda m/\sqrt{n}$ from \eqref{binc1}, and $ne^{-cn}$
from the probability that $E$ fails (recalling that our error terms are written relative to
$\Gamma / n$).
\end{proof}

Note that in Lemma~\ref{corebin}, there is no error term corresponding to the $r / \ell$
term in Lemma~\ref{core}, which can be traced back to the approximation error from applying
the Berry--Esseen Theorem in Lemma~\ref{tiltlemma}.

\section{Completing the proof}\label{sec_complete}

In this section we prove Theorem~\ref{th1} using Lemmas~\ref{core} and~\ref{corebin},
together with the following result of Fill and Janson~\cite{FJ2, FJ1}.

\begin{theorem}\label{thstart}
Let $F_n$ denote the distribution function of $(Q_n-q_n)/n$
and $f$ the continuous density function of the limiting distribution $Q$.
There is a constant $C$ such that for any $x$ and any $n\ge 1$ we have
\begin{equation}\label{FJstart}
 \left| \frac{F_n(x+\tfrac{\delta_n}{2})-F_n(x-\tfrac{\delta_n}{2})}{\delta_n} - f(x) \right| 
\le Cn^{-1/6},
\end{equation}
where $\delta_n=2C n^{-1/6}$. Furthermore, $f$ is differentiable on $\RR$, and we have
\begin{equation}\label{start1}
 |f(x)| \le 16 \quad\hbox{and}\quad  |f'(x)| \le \widetilde{C} := 2466
\end{equation}
for all $x\in \RR$.
\end{theorem}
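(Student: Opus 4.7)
The plan is to prove Theorem~\ref{thstart} in three stages: first obtain a quantitative Kolmogorov--Smirnov rate $\sup_x|F_n(x)-F(x)|=O(n^{-1/3})$, second establish the density bounds $\|f\|_\infty\le 16$ and $\|f'\|_\infty\le 2466$, and finally combine the two via a smoothing/Taylor argument.

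For the first stage I would start from the distributional recurrence $Q_n\Leq Q_{I_n-1}+Q_{n-I_n}+(n-1)$, with $I_n$ uniform on $\{1,\dots,n\}$, and its fixed-point limit $Q\Leq UQ^{(1)}+(1-U)Q^{(2)}+C(U)$ with $C(U):=2U\log U+2(1-U)\log(1-U)+1$, and run \Roesler's contraction argument in some Wasserstein-type or Zolotarev metric to obtain $d(Q_n^*,Q)=O(n^{-1/2}\log n)$, where $Q_n^*:=(Q_n-q_n)/n$. To convert this into a Kolmogorov--Smirnov bound I would use the density bound from stage two: if $|F_n(x_0)-F(x_0)|=\eta$ somewhere, then by the Lipschitz property of $F$ with constant $\|f\|_\infty$ together with monotonicity of $F_n$, the gap is at least $\eta/2$ throughout an interval of length $\Omega(\eta/\|f\|_\infty)$ adjacent to $x_0$; this contributes polynomially in $\eta$ to any reasonable Wasserstein-type distance, and inverting gives the stated $n^{-1/3}$ KS rate (after absorbing logarithms).

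The second stage is the main obstacle. To bound $\|f\|_\infty$ and $\|f'\|_\infty$, I would Fourier-analyse the fixed-point equation. Setting $\phi(t):=\E e^{itQ}$, we have
\[
 \phi(t) = \int_0^1\phi(ut)\,\phi((1-u)t)\,e^{itC(u)}\,du.
\]
Starting from $|\phi(t)|\le 1$ and iterating this identity, one exploits that for typical $u\in[0,1]$ both $ut$ and $(1-u)t$ are appreciably smaller in modulus than $t$, so the integrand is controlled by $|\phi|$ at smaller arguments; a bootstrap should yield stretched-exponential decay of the form $|\phi(t)|\le C_0 e^{-c_0\sqrt{|t|}}$. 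Fourier inversion then gives $f\in C^1(\RR)$ with $\|f\|_\infty\le(2\pi)^{-1}\int|\phi(t)|\,dt<\infty$ and $\|f'\|_\infty\le(2\pi)^{-1}\int|t\phi(t)|\,dt<\infty$. Extracting the explicit numerical constants $16$ and $2466$ requires careful book-keeping through the bootstrap, and is the most delicate step.

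For the third stage, fix $x\in\RR$ and $\delta>0$, and use the KS rate to write
\[
 F_n\bb{x+\tfrac{\delta}{2}}-F_n\bb{x-\tfrac{\delta}{2}} = F\bb{x+\tfrac{\delta}{2}}-F\bb{x-\tfrac{\delta}{2}}+O(n^{-1/3}).
\]
Taylor-expanding the right-hand side gives $\delta f(x)+O(\delta^2\|f'\|_\infty)$, so dividing by $\delta$ yields total error $O(\delta)+O(n^{-1/3}/\delta)$. Choosing $\delta=\Theta(n^{-1/6})$ balances both terms at $O(n^{-1/6})$, and setting the constant $C$ in $\delta_n=2Cn^{-1/6}$ accordingly proves~\eqref{FJstart}.
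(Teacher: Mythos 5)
The paper does not prove Theorem~\ref{thstart} at all; it is quoted as an input. Immediately after the statement the authors record that the main estimate \eqref{FJstart} is \cite[Theorem 6.1]{FJ1} and the density bounds \eqref{start1} are \cite[Theorem 3.3]{FJ2}, and those references are where the real work happens. So there is no in-paper argument for you to reproduce or diverge from; the only meaningful comparison is with the Fill--Janson papers themselves.

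Your three-stage outline does echo the structure of those two papers: a quantitative rate of convergence (FJ1), Fourier-analytic bounds on $f$ and $f'$ from the fixed-point equation (FJ2), and a smoothing/Taylor step that combines them (the proof of \cite[Theorem 6.1]{FJ1}). Stage~3 is a correct deduction given Stages~1 and~2. But Stages~1 and~2 are plans, not proofs. In Stage~2 you write that "a bootstrap should yield" $|\phi(t)|\le C_0 e^{-c_0\sqrt{|t|}}$ and then concede that extracting $16$ and $2466$ "requires careful book-keeping through the bootstrap, and is the most delicate step"; that step is essentially all of \cite{FJ2} and is not carried out here. (For the record, $\|f\|_\infty$ and $\|f'\|_\infty$ only require $\int(1+|t|)|\phi(t)|\,\mathrm{d}t<\infty$, so the superpolynomial decay FJ2 actually proves already suffices; the stretched-exponential claim is an unproved over-statement.) In Stage~1 the Wasserstein-to-KS conversion has a concrete quantitative gap. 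The gap-over-an-interval argument you sketch gives
\[
 d_{\mathrm{KS}}(F_n,F) = O\bigl(\sqrt{\|f\|_\infty\, W_1(F_n,F)}\bigr),
\]
so from the quoted $W_1=O(n^{-1/2}\log n)$ you obtain $d_{\mathrm{KS}}=O(n^{-1/4}\sqrt{\log n})$, not the asserted $O(n^{-1/3})$. Feeding $n^{-1/4}$ into your Stage-3 balance $\delta+d_{\mathrm{KS}}/\delta$ produces a final error of order $n^{-1/8+o(1)}$, strictly weaker than the $n^{-1/6}$ in~\eqref{FJstart}. To land on $n^{-1/6}$ by this route you need $d_{\mathrm{KS}}=O(n^{-1/3})$, and neither a proof of that bound nor a Wasserstein/Zolotarev input strong enough to deliver it under your conversion appears in the sketch.
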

The first (main) statement is part of \cite[Theorem 6.1]{FJ1}; the
second statement is from \cite[Theorem~3.3]{FJ2}. Of course, the particular
values of the constants will not be relevant here.

Rephrased, \eqref{FJstart} says that for any (half-open, as usual) interval $I$ of length
$m=\delta_n n = 2C n^{5/6}$, we have
\begin{equation}\label{prestart}
 | \Pr(Q_n\in I) - \tfrac{m}{n}f(x_I) | \le Cn^{-1/6} \tfrac{m}{n},
\end{equation}
where $x_I$ is such that $q_n+nx_I$ is the midpoint of $I$.
This is almost, but not quite, condition (i) of the statement $S(n,m,\eps,\Gamma)$
defined before Lemma~\ref{core}.

\begin{corollary}\label{Cstart}
Let $C$ and $\widetilde{C}$ be as in Theorem~\ref{thstart}, and set $C'=C+\widetilde{C}C$.
If $n$ is large enough then $S(n,m,\eps,\Gamma)$ holds with
$m=2Cn^{5/6}$, $\eps=C'n^{-1/6}$, and $\Gamma=17$.
\end{corollary}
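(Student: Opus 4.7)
The strategy is to derive both clauses of $S(n,m,\eps,\Gamma)$ directly from the restated form~\eqref{prestart} of Theorem~\ref{thstart}, using the Lipschitz bound on~$f$ from~\eqref{start1} to convert a statement about the midpoint of~$I$ into one about an arbitrary point of~$I$.

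First I would verify property~(i). Fix a half-open interval $I$ of length $m = 2Cn^{5/6}$, let $x_I$ be such that $q_n + nx_I$ is the midpoint of $I$, and let $x$ be any real with $q_n + nx \in I$. Then $|x - x_I| \le m/(2n) = Cn^{-1/6}$, so by $|f'| \le \widetilde{C}$,
\[
 |f(x) - f(x_I)| \le \widetilde{C} \cdot C n^{-1/6}.
\]
Combining this with~\eqref{prestart} via the triangle inequality gives
\[
 \bigl|\Pr(Q_n \in I) - \tfrac{m}{n} f(x)\bigr| \le C n^{-1/6} \tfrac{m}{n} + \widetilde{C} C n^{-1/6} \tfrac{m}{n} = C' n^{-1/6} \tfrac{m}{n},
\]
which is precisely condition~(i) with $\eps = C' n^{-1/6}$.

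For property~(ii), the bound $f(x_I) \le 16$ from~\eqref{start1} combined with~\eqref{prestart} yields
\[
 \Pr(Q_n \in I) \le \bb{f(x_I) + Cn^{-1/6}} \tfrac{m}{n} \le (16 + Cn^{-1/6}) \tfrac{m}{n};
\]
taking $n$ large enough that $Cn^{-1/6} \le 1$ gives $\Pr(Q_n \in I) \le 17 \cdot \tfrac{m}{n}$, so $\Gamma = 17$ suffices.

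There is no real obstacle here: the only substantive point is the observation that~\eqref{prestart} controls $\Pr(Q_n \in I)$ relative to $f$ evaluated at the \emph{midpoint} of the normalized interval, while condition~(i) of $S$ requires control relative to $f$ evaluated at an \emph{arbitrary} point, and the differentiability bound in~\eqref{start1} bridges the gap with an error of the same order ($n^{-1/6}$) as the one already present.
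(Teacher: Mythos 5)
Your proof is correct and matches the paper's argument essentially step for step: both use the Lipschitz bound from~\eqref{start1} to pass from the midpoint $x_I$ in~\eqref{prestart} to an arbitrary $x$ with $q_n+nx\in I$, then combine via the triangle inequality for~(i), and use $|f|\le 16$ for~(ii). The only cosmetic difference is that for~(ii) you invoke~\eqref{prestart} at $x_I$ directly while the paper reuses the already-derived~\eqref{start}; both give the required $17\,m/n$ bound.
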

\begin{proof}
Let $I$ be any (half-open, as always) interval of length $m$. To establish property (i) of
$S(n,m,\eps,\Gamma)$ we must show that
\begin{equation}\label{start}
  | \Pr(Q_n\in I) - \tfrac{m}{n}f(x) | \le C'n^{-1/6} \tfrac{m}{n}
\end{equation}
for any $x$ such that $q_n+nx\in I$. Let $x_I$ be such that $q_n+nx_I$ is the midpoint
of $I$. Then
\[
 |x-x_I| = \frac{|(q_n+nx)-(q_n+nx_I)|}{n} \le \frac{m}{2n} = Cn^{-1/6}.
\]
By the Mean Value Theorem and the second bound in \eqref{start1} we have
\[
 |f(x) - f(x_I)| \le \widetilde{C} |x - x_I| \leq \widetilde{C}Cn^{-1/6}.
\]
This, \eqref{prestart} and the triangle inequality imply \eqref{start}.

To establish statement~(ii) of $S(n, m, \eps, \Gamma)$ simply note that by \eqref{start}
and \eqref{start1} we have
\[
 \Pr(Q_n\in I) \le \tfrac{m}{n}f(x) + C'n^{-1/6} \tfrac{m}{n} \le (16+C'n^{-1/6}) \tfrac{m}{n}
 \le 17 \tfrac{m}{n},
\]
if $n$ is large enough.
\end{proof}

We are now ready to prove Theorem~\ref{th1}.
\begin{proof}[Proof of Theorem~\ref{th1}]
We will show that the theorem holds with the error term $O(n^{-1 - \eps})$ replaced by 
$O(n^{-1 - (1/18)} \log n)$. To do this, it suffices to establish that $S(n,1,\eps',\Gamma')$
holds for some $\eps'=O(n^{-1/18} \log n)$; condition (i) of this statement
is exactly \eqref{th1eq}.
We establish this by using
\[
 K := \floor{\tfrac{1}{2}\log_2 n} + 1
\]
rounds of smoothing, as we now explain in some detail.

In round~$k$, for $1 \leq k \leq K - 1$, we will apply Lemma~\ref{core} with parameters
\[
 (m, \eps, \Gamma, \ell, r, \lambda) = (m_k, \eps_k, \Gamma_k, \ell_k, r_k, \log n),
\]
to be specified in a moment.
Let the constants $C$ and $C'$ be as in Corollary~\ref{Cstart}.
We set
\[
 m_k := 4 C n^{5 / 6} 2^{-k}, \quad 1 \leq k \leq K,
\]
\[
 \ell_k := m_{k+1} = 2 C n^{5 / 6} 2^{-k}, \quad 1 \leq k \leq K - 1,
\]
and
\[
 r_k := \frac{(m_k \ell_k)^{2 / 3}}{n^{1 / 3}} = \Theta(n^{7/9}2^{-4k/3}) , \quad 1 \leq k \leq K - 1.
\]
Furthermore, we set
\[
 \eps_1 := C' n^{-1/6} \quad\hbox{and} \quad \Gamma_1 := 17,
\]
and
\[
 \eta_k := \widehat{C} 2^{-k/3} n^{-1/18} \log n, \quad 1 \leq k \leq K - 1,
\]
for a constant $\widehat{C}$ to be chosen in a moment. Then we inductively define
\[
 \eps_k := \eps_{k - 1} + \Gamma_{k - 1} \eta_{k - 1}, \quad 2 \leq k \leq K
\]
and
\[
 \Gamma_k := \Gamma_{k - 1} (1 + \eta_{k - 1}), \quad 2 \leq k \leq K.
\] 

The divisibility condition $\ell_k|m_k$ and the conditions \eqref{coreconds}
of Lemma~\ref{core} are easily seen to hold for $n$ large enough. Moreover,
we have
\[
 \lambda \frac{m_k}{\sqrt{r_kn}} = \Theta\bb{2^{-k/3}n^{-1/18}\log n} 
\quad\hbox{and}\quad \frac{r_k}{\ell_k} = \Theta\bb{2^{-k/3}n^{-1/18}}.
\]
(Indeed, given $m_k$ and $\ell_k$, we have chosen $r_k$ to balance these terms,
ignoring the slowly varying factor $\lambda = \log n$.)
Since the outer two terms in \eqref{etaform} are superpolynomially small, we
see that if $\widehat{C}$ is chosen suitably large,
then in each application of Lemma~\ref{core} we have $\eta\le \eta_k$.
Since the statement $S(n,m_1,\eps_1,\Gamma_1)$ holds by Corollary~\ref{Cstart},
applying Lemma~\ref{core} $K-1$ times we conclude that
$S(n,m_K,\eps_K,\Gamma_K)$ holds.

Note that $m_K=\Theta(n^{5/6} 2^{-K})=\Theta(n^{1/3})$.
In the final round we apply Lemma~\ref{corebin} with
\[
 (m, \eps, \Gamma, \lambda) = (m_K, \eps_K, \Gamma_K, \log n).
\]
The conditions \eqref{condbin} hold with room to spare (for $n$ large enough). The
quantity $\eta$ appearing in \eqref{etabin} is $O(n^{-1/6}\log n)=o(n^{-1/18})$,
so we conclude that
$S(n, 1, \eps', \Gamma')$ holds, with $\Gamma' = O(\Gamma_K)$
and $\eps' = \eps_K+o(n^{-1/18} \Gamma_K)$.

It remains to estimate $\Gamma_K$ and $\eps_K$.  First,
\[
 \Gamma_K = \Gamma_1 \prod_{j = 1}^{K - 1} (1+\eta_j)
 \leq \Gamma_1 \exp\left( \sum_{j = 1}^{K - 1} \eta_j \right)
 \le \Gamma_1 e^{5 \eta_1} \sim \Gamma_1 \text{\ as $n \to \infty$},
 \]
so (for $n$ large enough), $\Gamma_k\le 18$ for all $k$.
Then
\[
 \eps_K \leq \eps_1 + 18 \sum_{j = 1}^{k - 1} \eta_j = \eps_1+ O(\eta_1) = O(n^{-1/18} \log n).
\]
It follows that $\eps'=O(n^{-1/18}\log n)$, completing the proof.
\end{proof}

\section{A softer version}\label{sec_soft}

We describe here an argument for a weaker version of Theorem~\ref{th1}. The advantage of this argument is that it requires less as input:\ only a distributional limit theorem, not one with the explicit
rate of convergence in Theorem~\ref{thstart}. This may be useful in other contexts.

\begin{theorem}\label{th1weak}
Uniformly in integers~$x$
we have
\[
 \Pr(Q_n = x) =n^{-1} f((x - q_n) / n) + o(n^{-1})
\]
as $n\to\infty$, where $f$ is the continuous probability density function of $Q$.
\end{theorem}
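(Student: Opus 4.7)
The plan is to follow the three-stage structure of the proof of Theorem~\ref{th1}: produce an initial statement $S(n,m_n,\eps_n,\Gamma_n)$ at some long scale $m_n=o(n)$; iterate Lemma~\ref{core} to shrink the scale toward $\Theta(n^{1/3})$; and finish with one application of Lemma~\ref{corebin}. The only change is at the first stage, where the quantitative Theorem~\ref{thstart} is replaced by the purely qualitative input $(Q_n-q_n)/n \dto Q$ together with the (already known) continuity of $f$.

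For the starting point, let $F_n$ and $F$ denote the distribution functions of $(Q_n-q_n)/n$ and $Q$ respectively. Since $F$ is continuous, P\'olya's theorem upgrades the distributional convergence to uniform convergence, $\delta_n := \sup_x|F_n(x)-F(x)| \to 0$, with no explicit rate. Set $\mu_n := \sqrt{\delta_n}$ and $m_n := \lceil n\mu_n\rceil$. For any half-open interval $I=(a,b]$ of length $m_n$ and any $x$ with $q_n+nx\in I$, the uniform bound $\Pr(Q_n\in I) = F(b)-F(a) + O(\delta_n)$ together with $F(b)-F(a) = (m_n/n)f(x) + O((m_n/n)\omega(m_n/n))$ (valid for $x$ in any fixed compact set, which may be enlarged as $n\to\infty$ using any standard tail bound on $Q_n$) yields, after dividing by $m_n/n = \mu_n$, condition~(i) of $S(n,m_n,\eps_n,\Gamma_n)$ with $\eps_n = O(\sqrt{\delta_n}+\omega(\sqrt{\delta_n})) = o(1)$; condition~(ii) follows similarly with $\Gamma_n = \sup f + 1 = O(1)$.

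The iteration proceeds as in Section~\ref{sec_complete}, but with the tilting parameter $\lambda$ varying over rounds in order to accommodate an arbitrarily slow $\delta_n$. We halve the scale at each step ($\ell_k := m_k/2 = m_{k+1}$), take $\lambda_k^2 := \log(1/\mu_n) + Ak$ for a sufficiently large constant $A$, and pick $r_k$ to balance the $\lambda_k m_k/\sqrt{r_k n}$ and $r_k/\ell_k$ terms in~\eqref{etaform}. The exponential term $e^{-c\lambda_k^2} = \mu_n^c e^{-cAk}$ is summable to $O(\mu_n^c)$; the balanced polynomial term is $O((\lambda_k^2 \mu_n/2^{k-1})^{1/3})$, whose sum over $k$ is $O((\mu_n\log(1/\mu_n))^{1/3}) = o(1)$ (using $x\log(1/x)\to 0$); the remaining superpolynomially small tail term is negligible. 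Thus $\sum_{k=1}^{K-1}\eta_k = o(1)$ over the $K = O(\log n)$ rounds needed to reach $m_K = \Theta(n^{1/3})$, which keeps $\Gamma_k$ bounded and $\eps_k$ in $o(1)$. A final call of Lemma~\ref{corebin} with $\lambda = \log n$ contributes $\eta = O(\log n \cdot n^{-1/6}) = o(1)$, yielding $S(n,1,\eps',\Gamma')$ with $\eps'=o(1)$, which is exactly the claim.

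The main new ingredient, and the only genuine obstacle, is adapting the parameters to an unknown rate $\delta_n$: letting $\lambda_k$ depend on the round index ensures that the $e^{-c\lambda_k^2}$ errors form a convergent series over the $O(\log n)$ rounds rather than accumulating, while simultaneously keeping $\lambda_k^2\mu_n$ small enough that the balanced polynomial terms are summable. Once this adaptive choice is in place, the remainder of the argument is a straightforward reprise of Section~\ref{sec_complete}, with the appealing feature that the only distributional input beyond continuity of $f$ is $(Q_n-q_n)/n \dto Q$ itself.
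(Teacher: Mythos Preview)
Your proposal is correct and follows the same three-stage architecture as the paper: obtain an initial $S(n,m_n,\eps_n,\Gamma_n)$ at scale $m_n \approx n\sqrt{\delta_n}$ via P\'olya's theorem and uniform continuity of~$f$, iterate Lemma~\ref{core} down to a scale around $n^{1/3}$, then finish with one application of Lemma~\ref{corebin}. The only substantive difference is the iteration schedule. Writing $m=n/\omega$, the paper takes $\ell=n/\omega^{1.5}$ and $\lambda=\log\omega$ at each step, so $\omega$ is raised to the power~$1.5$ and $\eta_k=O(\omega_k^{-0.05})$; since $\omega_k=\omega_0^{1.5^k}$, the sum $\sum_k\eta_k$ is $o(1)$ after only $O(\log\log n)$ rounds. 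You instead halve $m$ at each step and compensate with the round-dependent choice $\lambda_k^2=\log(1/\mu_n)+Ak$, which also gives summable $\eta_k$ over $O(\log n)$ rounds. Both schedules work; the paper's is a little slicker in that $\lambda$ needs no explicit dependence on the round index, while yours stays closer to the halving scheme already used for Theorem~\ref{th1}. One minor point: your parenthetical about restricting to compact sets and invoking a tail bound on $Q_n$ is unnecessary---$f$ is uniformly continuous on all of~$\RR$ (from~\cite{FJ2}), and the paper simply uses this global uniform continuity directly.
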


\begin{proof}
We take as our starting point that $(Q_n-q_n)/n\dto Q$, making no assumption about
the rate of convergence. We do assume certain properties, immediate from \cite[Theorem~3.3]{FJ2}, of the density function~$f$ 
of~$Q$,
namely that $f$ is bounded (by $M$, say) and uniformly continuous on $\RR$.
The only other properties of $Q_n$ we use are the decompositions provided by Lemmas~\ref{expand2}
and~\ref{expandbin}. This is all we need to prove Lemmas~\ref{core} and~\ref{corebin} exactly
as above. The difference to the argument in Section~\ref{sec_complete} is how we apply
these lemmas.

Let $F_n$ be the distribution function of the normalized distribution $Q_n^*=(Q_n-q_n)/n$,
and let $F$ be that of $Q$. Since $Q_n^*\dto Q$ and $F$ is continuous, we have 
$F_n\to F$ in sup-norm by Poly\`{a}'s theorem (for example, \cite[Exercise~4.3.4]{Chung}). In other words, there is some $\delta(n)\to 0$ such that
for all $x$ and $n$ we have 
\begin{equation}\label{Fconv}
 |F_n(x)-F(x)|\le \delta(n).
\end{equation}
Let
\begin{equation}\label{fdiff}
 \gamma(n) := \sup_{x,y\::\:|x-y|\le \delta(n)^{1/2}} |f(x)-f(y)|.
\end{equation}
Since $\delta(n)^{1/2}\to 0$ and $f$ is uniformly continuous, $\gamma(n)\to 0$ as $n\to\infty$.
For any interval $I$ of length $|I| = \delta(n)^{1/2}$, by \eqref{Fconv} we have that $\Pr(Q_n^*\in I)$
is within $2\delta(n)$ of $\int_I f(x)\mathrm{d}x$ which, by \eqref{fdiff}, is within
$\gamma(n)|I|$ of $f(x)|I|$ for any $x\in I$.
Thus, $\Pr(Q_n^*\in I)$ is within $[2\delta(n)^{1/2}+\gamma(n)]|I|$ of $f(x)|I|$.
Replacing $Q_n^*$ by $Q_n$ and $I$ by $q_n+nI$, this says exactly that property~(i)
of $S(n, m_0, \eps_0, \Gamma_0)$ holds, where
$m_0=m_0(n)=n \delta(n)^{1/2}$ and $\eps_0=\eps_0(n)=2\delta(n)^{1/2}+\gamma(n)$.
Taking $\Gamma_0 = M + 1$, which is at least $M+\eps_0(n)$ for $n$ large enough, we also have property (ii).

To summarize, the distributional limit theorem (plus assumptions on $f$) gives us that there
exist $m_0=o(n)$, $\eps_0=o(1)$ and $\Gamma_0 = O(1)$ such that $S(n, m_0, \eps_0, \Gamma_0)$ holds. We now aim
to apply Lemma~\ref{core} as many times as necessary.
The key point is that if $S(n,m,\eps,\Gamma)$ holds where $m=n/\omega$, with $\omega\to\infty$,
then in one step we can roughly square $\omega$. More precisely, set $\ell=n/\omega^{1.5}$, say.
Then to satisfy \eqref{key} we can take say $r=n/\omega^{1.8}$, so
\[
 \frac{m}{\sqrt{rn}} = \omega^{-0.1}\quad\hbox{and}\quad \frac{r}{\ell} = \omega^{-0.3}.
\]
Choosing $\lambda=\log\omega$, say,
the $e^{-c\lambda^2}$ term in \eqref{etaform} is superpolynomially small in~$\omega$. 
The term $(n/\ell)e^{-cn/r}$
is $\omega^{1.5} e^{-c\omega^{1.8}}$, which tends to zero extremely quickly as $\omega$ grows.
The conclusion is that the conditions of Lemma~\ref{core} will hold, with $\eta=O(\omega^{-0.1}\log\omega)=O(\omega^{-0.05})$, say.
Applying the lemma repeatedly, in the $i$th application we have
$\omega=\omega_i=\omega_0^{1.5^i}$;
we stop when $\ell$ is no more than $n^{0.4}$, say 
(and hence, provided $\omega_0 \leq n^{0.9}$, which we may presume without loss of generality, is at least $n^{0.1}$).
Since $\sum_i \omega_i^{-0.05}=o(1)$ (recalling that $\omega_0\to\infty$), the sum of the error terms
$\eta$ is $o(1)$, and we find that some $S(n, m, \eps, \Gamma)$ holds with $n^{0.1}\le m\le n^{0.4}$,
$\eps=o(1)$ and $\Gamma = O(1)$. A single application of Lemma~\ref{corebin}, say with 
$\lambda=\log n$, yields $S(n, 1, \eps', \Gamma')$ where also $\eps'=o(1)$, completing the proof.
\end{proof}

\end{document}